\newtheorem{theorem}{Theorem}
\newtheorem{claim}{Claim}
\newtheorem{lemma}{Lemma}
\newtheorem{remark}{Remark}
\newtheorem{sublemma}{Sublemma}
\newtheorem{prop}{Proposition}
\newtheorem{definition}{Definition}
\numberwithin{equation}{section}
\begin{document}
\captionsetup[figure]{labelfont={bf},name={},labelsep=period}
\pagestyle{plain}

\title{Maximal first Betti number rigidity of noncompact \texttt{RCD}(0,$N$) spaces}
\author{Zhu Ye\thanks{Supported partially by  National Natural Science Foundation of China [11821101] and  Beijing Natural Science Foundation [Z19003].\\	Email address: 2210501006@cnu.edu.cn.}}
\affil{Department of Mathematics, Capital Normal University}
\date{}
\maketitle
\begin{abstract}
Let $(M,d,\mathfrak{m})$ be a noncompact \texttt{RCD}(0,$N$) space with $N\in\mathbb{N}_+$ and $\text{supp}\mathfrak{m}=M$. We prove that if the  first Betti number of $M$ equals $N-1$, then $(M,d,\mathfrak{m})$ is either a flat Riemannian $N$-manifold with a soul $T^{N-1}$  or the metric product $[0,\infty)\times T^{N-1}$, both with  the measure a multiple of the Riemannian volume, where $T^{N-1}$ is a flat torus. 
\end{abstract}

\section{Introduction}
Let $K\in\mathbb{R}$ and $N\in[1,\infty)$ throughtout the paper. The $\texttt{RCD}(K,N)$ spaces or $\texttt{RCD}^*(K,N)$ spaces (\cite{sturm1},\cite{sturm2},\cite{villani},\cite{local},\cite{R}) are both synthetic counterparts of Riemannian manifolds with Ricci curvature bounded below by $K$ and dimension bounded above by $N$.  Many classical results that hold for manifolds with Ricci curvature bounded below remain true in the synthetic setting. Here we mention some of them that will be used in the  present paper. In \cite{gigli}, Gigli established the splitting theorem on $\texttt{RCD}(0,N)$ spaces.  In \cite{uni1}, the existence of the universal cover of  an $\texttt{RCD}^*(K,N)$ space is established by Mondino-Wei. This universal cover admits a natural $\texttt{RCD}^*(K,N)$ structure such that the covering projection is a local metric measure isometry.  
Recently in \cite{uni2},  based on the results of \cite{uni1}, Wang proved that  an $\texttt{RCD}^*(K,N)$ space is semi-locally simply connected, and hence its universal cover is simply connected.  In \cite{nb}, Deng proved that an $\texttt{RCD}(K,N)$ space is nonbranching.

The main purpose of this paper is to generalize  the following result in \cite{Ye} to $\texttt{RCD}(0,N)$ spaces:

\begin{theorem}\label{recall} If $M$ is an open Riemannian n-manifold with $\text{Ric}_M\geq0$, then $b_1(M) \leq n-1$ and the equality holds if and only if $M$ is flat with a soul $T^{n-1}$. 
\end{theorem}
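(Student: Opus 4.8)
The plan is to pass to a suitable abelian cover of $M$ and to show that it splits off a Euclidean factor of the largest possible dimension. Write $b=b_1(M)=N-1$ and let $\Phi\colon \pi_1(M)\to H_1(M;\mathbb Z)\to\mathbb Z^{b}$ be the projection onto the free part of the abelianization. Let $\hat M\to M$ be the cover associated with $\ker\Phi$, so that its deck transformation group is $\Gamma\cong\mathbb Z^{b}=\mathbb Z^{N-1}$. By the results of Mondino--Wei quoted above, $\hat M$ carries a natural $\texttt{RCD}(0,N)$ structure for which $\Gamma$ acts freely, properly discontinuously and by measure-preserving isometries, with $M=\hat M/\Gamma$. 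The goal is then to prove that $\hat M$ splits isometrically as $\mathbb R^{N-1}\times W$, where $W$ is a one-dimensional $\texttt{RCD}(0,1)$ space and $\Gamma$ acts as a cocompact lattice of translations on $\mathbb R^{N-1}$ and by isometries on $W$; the theorem follows by identifying $W$ and reading off $M$.

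The core of the argument is this splitting, and it is where I expect the main obstacle to lie. I would first invoke Gigli's splitting theorem: if $\hat M$ contains a line it splits as $\mathbb R\times\hat M'$ with $\hat M'$ an $\texttt{RCD}(0,N-1)$ space, and I would iterate to obtain the maximal decomposition $\hat M=\mathbb R^{k}\times Y$ with $Y$ an $\texttt{RCD}(0,N-k)$ space containing no line. Because the Euclidean factor is canonical (preserved by every isometry), each $g\in\Gamma$ respects the product, giving $g=(\alpha(g),\beta(g))$ with $\alpha\colon\Gamma\to\mathrm{Isom}(\mathbb R^{k})=O(k)\ltimes\mathbb R^{k}$ and $\beta\colon\Gamma\to\mathrm{Isom}(Y)$. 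The hard point is to show that the translational part of $\alpha$ has rank exactly $N-1$, i.e.\ that all of $\Gamma$ acts by translations on an $(N-1)$-dimensional Euclidean subfactor; this is the $\texttt{RCD}$ replacement for the Bochner identity $b_1=\dim\{\text{parallel }1\text{-forms}\}$ in the smooth case. Concretely I would show that each infinite-order $g\in\Gamma$ is \emph{axial}, its displacement function $d_g(\cdot)=d(\cdot,g\,\cdot)$ attaining a positive minimum along an invariant line, so that the commuting, properly discontinuous family $\Gamma\cong\mathbb Z^{N-1}$ produces $N-1$ independent invariant lines and hence, by the equivariant splitting theorem, an isometric splitting $\hat M=\mathbb R^{N-1}\times W$ on which $\Gamma$ acts by a rank-$(N-1)$ lattice of translations (a rank-$N$ lattice would force a compact flat $T^{N}$ factor, making $M$ compact, so noncompactness caps the rank at $N-1$). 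By the dimension bookkeeping of the splitting theorem, $W$ is an $\texttt{RCD}(0,1)$ space of dimension $N-(N-1)=1$.

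It remains to classify $W$ and the residual action. A one-dimensional $\texttt{RCD}(0,1)$ space is isometric to one of $\mathbb R$, $[0,\infty)$, a circle, or a compact interval, and in each case its reference measure is a constant multiple of the one-dimensional Hausdorff measure, since $N=1$ forces constant density. As $\Gamma$ is cocompact on the $\mathbb R^{N-1}$ factor, a compact $W$ (circle or interval) would make $M=\hat M/\Gamma$ compact, contrary to hypothesis; hence $W=\mathbb R$ or $W=[0,\infty)$. Since $\Gamma\cong\mathbb Z^{N-1}$ acts freely and cocompactly on $\mathbb R^{N-1}$, Bieberbach's theorem shows $\mathbb R^{N-1}/\Gamma\cong T^{N-1}$. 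When $W=[0,\infty)$ the only isometry of the ray is the identity, so $\Gamma$ acts trivially on the second factor and $M=[0,\infty)\times T^{N-1}$. When $W=\mathbb R$ we have $\hat M=\mathbb R^{N}$ and $M=\mathbb R^{N}/\Gamma$ is a flat Riemannian $N$-manifold admitting a soul isometric to $T^{N-1}$ (the rotational parts of the screw motions in $\Gamma$ account for possibly nontrivial normal bundles, but leave the soul a flat torus $T^{N-1}$).

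Finally, for the metric-measure rigidity: by the splitting theorem the reference measure of $\hat M$ is the product of Lebesgue measure on the Euclidean factor with the constant-density measure of $W$, so $\hat{\mathfrak m}$ is a constant multiple of $\mathcal L^{N}$ on $\mathbb R^{N}$, respectively of $\mathcal L^{N-1}\otimes\mathcal L^{1}$ on $\mathbb R^{N-1}\times[0,\infty)$; passing to the quotient, $\mathfrak m$ is a constant multiple of the Riemannian volume in the first case and of the product measure in the second. Thus the entire difficulty is concentrated in the equivariant splitting with full translational rank; once that is in place, the classification of the one-dimensional factor and the identification of the measure are routine consequences of the splitting theorem and Bieberbach's theorem.
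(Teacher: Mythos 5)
Your outline reduces everything to one assertion: that every infinite-order element of the deck group $\Gamma\cong\mathbb Z^{N-1}$ of the abelian cover is \emph{axial}, i.e.\ its displacement function $d_g(x)=d(x,gx)$ attains a positive minimum along an invariant line, so that one can extract $N-1$ independent lines and split $\hat M=\mathbb R^{N-1}\times W$ equivariantly. You flag this as the hard point but offer no argument for it, and it is precisely the step that fails for the stated reason that $M$ is \emph{open}: on a noncompact space the displacement function of a deck transformation need not attain its infimum (the Cheeger--Gromoll/Wolf argument that makes central deck transformations into Clifford translations uses that $d_g$ descends to a function on a \emph{compact} quotient, or convexity of $d_g$ under a sectional-curvature hypothesis; neither is available here). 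Without axiality you get no invariant line, no equivariant $\mathbb R^{N-1}$-factor, and no control on the rank of the translational part --- and indeed the whole content of the theorem, as opposed to the classical compact bound $b_1\le n$, is concentrated in exactly this point. Note also that your write-up never establishes the inequality $b_1(M)\le n-1$ itself: the parenthetical ``a rank-$N$ lattice would force $M$ compact'' presupposes the very splitting that is missing.

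The paper (quoting the result from \cite{Ye}, and reproving its generalization) takes a genuinely different route that supplies the missing quantitative input. It works on the \emph{universal} cover, splits $\tilde X=\mathbb R^k\times Y$ with $Y$ containing no line (Theorem \ref{splitting}), and then bounds the \emph{orbit growth} of the full deck group: since $Y$ is lineless, the isometry orbit of a ball in $Y$ is trapped in a bounded set union the ``almost cut locus'' $C_h(y_0)$, and the decisive estimate (Lemma \ref{key}(2), proved via Brunn--Minkowski, or via volume-element comparison in the smooth case of \cite{Ye}) shows $\mathfrak m_Y(C_h(y_0)\cap B_R(y_0))=o(R^{N-k-1})$. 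Counting disjoint translates of a small ball then forces the orbit growth of $\Gamma$ to be of order $<N-1$ unless $k\ge N-1$, while $b_1=N-1$ forces orbit growth of order $\ge N-1$ via the surjection $\pi_1(M)\to\mathbb Z^{N-1}$ and Theorem \ref{orbit2}. This replaces the unavailable ``displacement function attains its minimum'' argument. If you want to salvage your approach, you would need to prove an analogue of this cut-locus measure estimate (or some other mechanism compensating for the non-cocompactness of the action); as written, the proposal has a genuine gap at its central step.
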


See \cite{Ye} for a background of Theorem \ref{recall}.

Throughtout this paper, for a $\texttt{CD}^*(K,N)$ space $(X,d,\mathfrak{m})$, we will always assume that $\text{supp}\mathfrak{m}=X$. This is not a restrictive condition since if $(X,d,\mathfrak{m})$ is  $\texttt{CD}^*(K,N)$  then $(\text{supp}\mathfrak{m},d,\mathfrak{m})$ is also $\texttt{CD}^*(K,N)$.

The following is  the main result of this paper:
\begin{theorem}\label{rigidity} 
Let $N\geq2$ and let $(X,d,\mathfrak{m})$ be a noncompact $\texttt{RCD}(0,N)$ space. Then the first Betti number $b_1(X)\leq [N]-1$. If $N\in \mathbb{N}_+$, then the equality holds if and only if $(X,d,\mathfrak{m})$ is  either a flat $N$-manifold with a soul $T^{N-1}$ or $[0,\infty)\times T^{N-1}$, both with $\mathfrak{m}$ a multiple of the Lebesgue measure.
\end{theorem}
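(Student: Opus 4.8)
The plan is to follow the structure of the classical proof (Theorem \ref{recall}) while replacing each Riemannian ingredient by its synthetic counterpart available in the $\texttt{RCD}$ setting. The overall strategy is to pass to a suitable cover on which the splitting theorem applies, analyze the resulting product structure, and then read off the geometry of $X$ from the deck transformation group.

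\medskip

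\emph{Step 1: the inequality via the universal cover and splitting.}
First I would pass to the universal cover $(\tilde X, \tilde d, \tilde{\mathfrak m})$, which by \cite{uni1} and \cite{uni2} exists, is simply connected, and carries a natural $\texttt{RCD}(0,N)$ structure with the projection a local isometry. The deck transformation group $G \cong \pi_1(X)$ acts by measure-preserving isometries. Since $b_1(X)$ is the rank of $H_1(X;\mathbb{Z})$, equivalently the rank of the abelianization $\pi_1(X)^{\mathrm{ab}}$, I would consider the cover $\hat X$ corresponding to the commutator subgroup, so that its deck group $\Gamma$ is the free abelian group $\mathbb{Z}^{b_1}$ acting on the $\texttt{RCD}(0,N)$ space $\tilde X$ (or rather on $\hat X$). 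Applying Gigli's splitting theorem \cite{gigli} iteratively to lines produced by the noncompact abelian action, $\tilde X$ (or $\hat X$) should split off a Euclidean factor $\mathbb{R}^k \times Y$ with $Y$ a lower-dimensional $\texttt{RCD}(0,N-k)$ space; the free abelian group of rank $b_1$ embeds as a lattice acting cocompactly in the directions where lines exist. The dimension bound forces $k \le N$, and because $X$ is \emph{noncompact}, at least one direction of noncompactness must remain unused by the cocompact lattice action, yielding $b_1 \le [N]-1$ rather than $[N]$. The key quantitative input here is that each independent generator of the torsion-free abelian deck group, acting cocompactly, contributes one splitting line and hence one Euclidean factor, while noncompactness of the quotient obstructs the last factor.

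\medskip

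\emph{Step 2: the rigidity case when $N \in \mathbb{N}_+$ and $b_1 = N-1$.}
Assume equality. Then the splitting gives $\tilde X = \mathbb{R}^{N-1} \times Z$ where $Z$ is a one-dimensional $\texttt{RCD}(0,1)$ space and $\Gamma = \mathbb{Z}^{N-1}$ acts as a cocompact lattice of translations on the $\mathbb{R}^{N-1}$ factor, acting trivially on $Z$. The classification of one-dimensional $\texttt{RCD}(0,1)$ spaces forces $Z$ to be isometric (up to measure normalization) to either $\mathbb{R}$, $[0,\infty)$, or a circle; the circle is excluded since $\tilde X$ is simply connected and $X$ is noncompact. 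If $Z = [0,\infty)$, quotienting by $\mathbb{Z}^{N-1}$ gives $X = [0,\infty) \times T^{N-1}$; if $Z = \mathbb{R}$, then $\tilde X = \mathbb{R}^N$ and $X$ is a flat $N$-manifold whose soul, by Cheeger–Gromoll soul theory transported to this flat setting, is the totally geodesic torus $T^{N-1}$. In both cases the measure $\mathfrak m$ is forced to be a constant multiple of the Riemannian (Lebesgue) volume, because the $\texttt{RCD}(0,N)$ condition together with the product and maximal-rank structure pins down the density via the Bishop–Gromov rigidity already embedded in the splitting theorem.

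\medskip

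\emph{Main obstacle.}
I expect the principal difficulty to lie in making Step 1 rigorous, specifically in controlling how the free abelian deck group interacts with the splitting and in extracting the \emph{strict} inequality $b_1 \le [N]-1$ from noncompactness. In the Riemannian proof one uses that a line in the universal cover plus a cocompact lattice yields a Euclidean de Rham factor; synthetically one must verify that the lattice $\mathbb{Z}^{b_1}$ genuinely produces $b_1$ independent splitting lines (not merely rays), that the induced action on the Euclidean factor is by translations, and that the surviving noncompact direction is not itself covered by a lattice generator. A second delicate point is the classification of the residual one-dimensional factor and the identification of $\mathfrak m$ with Lebesgue measure, which requires invoking the rigidity clause of the $\texttt{RCD}$ splitting and the structure theory of low-dimensional $\texttt{RCD}$ spaces rather than a direct computation. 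The identification of the soul as $T^{N-1}$ in the flat manifold case should then follow from standard soul-theoretic arguments once flatness and the $\mathbb{Z}^{N-1}$-action are established.
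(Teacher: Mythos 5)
There is a genuine gap, and it sits exactly where you locate your ``main obstacle'': the claim in Step 1 that the rank-$b_1$ free abelian deck group ``contributes one splitting line'' per generator. Producing a line from a deck transformation requires its displacement function to attain its minimum (so that the isometry translates along an axis, which can then be concatenated into a line); this is guaranteed when the quotient is compact, but $X$ here is noncompact and the relevant quotient of the intermediate cover is $X$ itself, so no cocompactness is available. Consequently neither the inequality $b_1\le[N]-1$ nor the identification of the equality case can be extracted by counting Euclidean factors, and your heuristic that ``one direction of noncompactness must remain unused'' is not a proof. The paper's route is entirely different and is designed precisely to avoid this issue: it proves a quantitative orbit-growth theorem (Theorem \ref{orbit1}). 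Since $\pi_1(X)$ surjects onto $\mathbb{Z}^{b_1}$, Milnor's argument gives $\#D^{\Gamma}(\tilde p,r)\gtrsim r^{b_1}$; in the other direction, the splitting theorem is applied only once, to write $\tilde X\cong Y\times\mathbb{R}^k$ with $Y$ line-free, and then the deck orbit is trapped (via Proposition \ref{iso} and the fact that $\mathrm{Isom}(Y)\cdot B_l(y_0)\subset B_h(y_0)\cup C_h(y_0)$) inside a region whose measure is controlled by the key new estimate $\mathfrak m\big(C_h(y_0)\cap B_R(y_0)\big)=o(R^{N-k-1})$, proved by a Brunn--Minkowski argument that crucially uses nonbranching (Lemma \ref{key}(2)). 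This yields $\#D^{\Gamma}(\tilde p,r)=O(r^k)+o(r^{N-1})$, hence $b_1\le[N]-1$, and the equality case forces $k=N$ or $k=N-1$, from which the rigid models emerge. Nothing in your proposal supplies a substitute for this measure estimate, which is the actual content of the noncompact case.

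Your Step 2 also leans on assertions that need separate arguments in the paper: that $\Gamma$ acts trivially on the $Z=[0,\infty)$ factor follows from the splitting of isometries of $N\times\mathbb{R}^k$ (Proposition \ref{iso}) together with the triviality of $\mathrm{Isom}([0,\infty))$, not from the splitting theorem itself; that the cross-section is $T^{N-1}$ rather than an arbitrary compact flat $(N-1)$-manifold uses $b_1=N-1$ together with the Bieberbach theorem; and the identification $\mathfrak m_Y=c\mathcal{L}^1$ on the one-dimensional factor is not ``embedded in the splitting theorem'' but is proved directly by a Brunn--Minkowski computation on intervals (Remark \ref{1}). These are fixable details, but the missing orbit-growth/cut-locus machinery in Step 1 is not.
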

\begin{remark}
	The above result is optimal in the sense that when $N\notin\mathbb{N}_+$,  the maximal first Betti number rigidity in the metric measure sense fails. Indeed, for each $h>0$, the space $([0,\infty),d,V_h\mathcal{L}^1)$ with $V_h(x)=f^h(x)$ and $f(x):[0,\infty)\rightarrow (0,2)$ convex  is $\texttt{RCD}(0,h+1)$, where $d$ is the Euclidean distance and $\mathcal{L}^1$ is the Lebesgue measure (cf. \cite{ent} Proposition 3.21). So the product of $([0,\infty),d,V_h\mathcal{L}^1)$ and $T^{[N]-1}$ with the Euclidean disdance and  the Lebesgue measure is an $\texttt{RCD}(0,N)$ space for every $0<h<N-[N]$ (see \cite{local} Theorem 4.1 and \cite{tensor}Theorem 7.6 for related tensorlization properties of $\texttt{RCD}(0,N)$ spaces).
	
The author conjectures  that the metric rigidity still holds in this case, i.e. that $(X,d)$ is isomorphic to a flat $N$-manifold with a soul $T^{N-1}$ or $[0,\infty)\times T^{N-1}$ if $(X,d,\mathfrak{m})$ is a noncompact $\texttt{RCD}(0,N)$ space with $b_1(X)=[N]-1$.     
\end{remark}

One knows from \cite{uni1} and \cite{uni2} that for a compact $\texttt{RCD}(0,N)$ space $(M,d,\mathfrak{m})$, the first Betti number $b_1(M) \leq [N]$, with equality holds if and only if $(M,d,\mathfrak{m})$ is a $T^{[N]}$ with $\mathfrak{m}$ a multiple of the Lebesgue measure.
This generalized a classical theorem of Bochner.
 For further results related to the first Betti number of compact $\texttt{RCD}$ spaces, see \cite{upper}.

As in \cite{Ye}, our approach to Theorem \ref{rigidity} is the estimate of the orbit growth of covering group actions.

\begin{definition}\label{def1}(\cite{Ye})
	Denote by $\#(A)$   the number of elements in a set $A$.		 Let $(X,d)$ be a metric space and let $\text{Isom}(X)$  be its  isometry group. Let $\Gamma$ be a subgroup of $\text{Isom}(X)$. For every $ x\in X$, put $	D^{\Gamma}(x,r)=\{g\in\Gamma :d(x,g(x))\leq r\}$.
	
	Given $p\in [0,\infty)$, we say  $\Gamma$ has polynomial orbit growth related  to $x$ of order $\geq p \,\,(\leq p)$, if and only if 
	\begin{equation*}
		\liminf\limits_{r\to\infty} \frac{\#(D^{\Gamma}(x,r))}{r^p}>0\,\,\,(	\limsup\limits_{r\to\infty} \frac{\#(D^{\Gamma}(x,r))}{r^p}<\infty).
	\end{equation*}
	We say  $\Gamma$ has polynomial orbit growth related  to $x$
	of order $> p\,\,(<p)$, if and only if 
	\begin{equation*}
		\lim_{r\to\infty} \frac{\#(D^{\Gamma}(x,r))}{r^p}=\infty\,\,(=0).
	\end{equation*}
\end{definition}
 
It is easy to check that the polynomial orbit growth properties defined above do not depend on the choice of the base point $x$. 

 Similar to \cite{Ye}, Theorem \ref{rigidity} follows easily from the following theorem on orbit growth  (cf. Proof of Theorem 1 in \cite{Ye}):
\begin{theorem}\label{orbit1}
Let $N\geq2$ and let $(X,d,\mathfrak{m})$ be a noncompact  $\texttt{RCD}(0,N)$ space. Let $\pi:(\tilde{X},\tilde{p})\rightarrow (X,p)$ be the universal cover with deck transformation group $\Gamma$, where $p\in X$. 

If $N$ is an integer, then $\Gamma$ has polynomial orbit growth of order $\leq N-1$. Moreover, $\Gamma$ fails to have polynomial orbit growth of order $<N-1$, i.e.  there exist $r_i\to\infty$ such that
\begin{equation*}
\lim_{i\to\infty} \frac{\#(D^{\Gamma}(\tilde{p},r_i))}{r_i^{N-1}}>0
\end{equation*}
 if and only if $(X,d,\mathfrak{m})$ is a flat $N$-manifold with an $N-1$ dimensional soul or $[0,\infty)\times X_1$ with $X_1$ is a compact flat
$(N-1)$-manifold, both with $\mathfrak{m}$ a multiple of the Riemannian volumes.

If $N$ is not an integer, then $\Gamma$ has polynomial orbit growth of order $<N-1$. 
\end{theorem}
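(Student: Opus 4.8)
The plan is to reduce Theorem \ref{orbit1} to an analysis of the Euclidean de Rham factor of the universal cover together with the translational part of the deck group. By \cite{uni1,uni2}, $(\tilde X,\tilde d,\tilde{\mathfrak m})$ is a simply connected noncompact $\texttt{RCD}(0,N)$ space carrying a free, properly discontinuous action of $\Gamma$ by measure-preserving isometries with $\tilde X/\Gamma=X$; noncompactness of $X$ means this action is not cocompact. I would first apply Gigli's splitting theorem \cite{gigli} repeatedly, splitting off one line at a time, to obtain a metric-measure isomorphism
\[
(\tilde X,\tilde d,\tilde{\mathfrak m})\cong\bigl(\mathbb{R}^k\times Y,\ d_{\mathrm{eucl}}\times d_Y,\ \mathcal{L}^k\otimes\mathfrak m_Y\bigr),
\]
where $(Y,d_Y,\mathfrak m_Y)$ is an $\texttt{RCD}(0,N-k)$ space containing no line and $0\le k\le N$. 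Because $\Gamma$ preserves this (canonical, maximal) Euclidean factor, every $g\in\Gamma$ decomposes as $g=(g_1,g_2)$ with $g_1\in\mathrm{Isom}(\mathbb{R}^k)$ and $g_2\in\mathrm{Isom}(Y)$, giving a homomorphism $\rho\colon\Gamma\to\mathrm{Isom}(\mathbb{R}^k)$, $g\mapsto g_1$, with image $\Gamma_1$.

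The next step is to show that the orbit growth of $\Gamma$ at $\tilde p$ is governed entirely by $\Gamma_1$ acting on $\mathbb{R}^k$. Since $Y$ has no line, I expect (Cheeger--Gromoll style; see below) every $\Gamma$-orbit in $Y$ to be bounded; writing $\tilde p=(q,y)$ and using $\tilde d(\tilde p,g\tilde p)^2=|q-g_1q|^2+d_Y(y,g_2y)^2$, this forces $\#(D^{\Gamma}(\tilde p,r))\asymp\#\{g_1\in\Gamma_1:|q-g_1q|\le r\}$ up to a multiplicative constant (the fibres of $\rho$ are finite, again by proper discontinuity together with boundedness of $Y$-orbits). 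As $\Gamma$ has polynomial growth (Milnor's argument applies via Bishop--Gromov on $\tilde X$), $\Gamma_1$ is a discrete virtually abelian subgroup of $\mathrm{Isom}(\mathbb{R}^k)$; by Bieberbach theory it has a finite-index free abelian subgroup whose translational part has some rank $m\le k$, and the orbit growth order of $\Gamma$ at $\tilde p$ equals $m$. The bound $m\le N-1$ then splits into two cases. If $Y$ is compact, then $X$ noncompact forces $\Gamma_1$ to be non-cocompact on $\mathbb{R}^k$, whence $m\le k-1\le N-1$. If $Y$ is noncompact, then $Y$ is nontrivial, so $N-k\ge 1$ and $m\le k\le N-1$.

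For the rigidity when $N\in\mathbb{N}_+$, note that $m$ is an integer and $\#(D^{\Gamma}(\tilde p,r))\asymp r^{m}$, so $\#(D^\Gamma(\tilde p,r))/r^{N-1}$ has a positive limit along a subsequence exactly when $m=N-1$. Tracing equality back through the two cases: if $Y$ is compact we are forced to have $k=N$ and $m=k-1=N-1$, so $Y$ is a point, $\tilde X=\mathbb{R}^N$ is flat with $\tilde{\mathfrak m}=c\,\mathcal{L}^N$, and $\Gamma$ is a torsion-free discrete group of Euclidean isometries of translational rank $N-1$; hence $X$ is a complete flat $N$-manifold whose soul is an $(N-1)$-dimensional compact flat manifold. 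If $Y$ is noncompact we must have $k=N-1$ and $\Gamma_1$ cocompact on $\mathbb{R}^{N-1}$, while $Y$ is a noncompact $\texttt{RCD}(0,1)$ space with no line; by the classification of one-dimensional spaces this forces $(Y,d_Y,\mathfrak m_Y)\cong([0,\infty),|\cdot|,c\,\mathcal{L}^1)$, and since $\mathrm{Isom}([0,\infty))$ is trivial the action is through $\Gamma_1$ only, giving $X\cong[0,\infty)\times X_1$ with $X_1=\mathbb{R}^{N-1}/\Gamma_1$ a compact flat $(N-1)$-manifold and $\mathfrak m$ a constant multiple of the Riemannian volume. The converse implications are direct computations in these two models, where the orbit growth is exactly of order $N-1$. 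Finally, when $N\notin\mathbb{N}_+$, the integer $m$ satisfies $m\le N-1$, hence $m<N-1$ strictly, and $\#(D^\Gamma(\tilde p,r))/r^{N-1}\asymp r^{m-(N-1)}\to 0$, i.e.\ $\Gamma$ has polynomial orbit growth of order $<N-1$.

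The main obstacle is the nonsmooth input behind the reduction in the second paragraph, which must replace the Riemannian tools used in \cite{Ye}. Concretely, I must (i) justify that the maximal Euclidean factor is canonical and $\Gamma$-invariant and that $\mathrm{Isom}(\mathbb{R}^k\times Y)$ splits as $\mathrm{Isom}(\mathbb{R}^k)\times\mathrm{Isom}(Y)$ when $Y$ has no line, and (ii) prove that a group of isometries of a no-line $\texttt{RCD}(0,m)$ space has bounded orbits --- the analogue of the Cheeger--Gromoll construction of a line from an unbounded orbit, which in the synthetic setting relies on Gigli's splitting theorem \cite{gigli} together with the nonbranching property of \cite{nb} to pass limits of minimizing geodesics to a line. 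The remaining delicate point is the rigidity of one-dimensional $\texttt{RCD}(0,1)$ spaces and the identification of $\mathfrak m$ with a multiple of the Lebesgue/Riemannian measure, both of which I would extract from the measure-splitting statement in Gigli's theorem.
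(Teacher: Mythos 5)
Your overall architecture (split $\tilde X\cong\mathbb{R}^k\times Y$ with $Y$ line-free, split the isometries, and read off the growth from the Euclidean factor) matches the paper's, and your item (i) is exactly the paper's Proposition \ref{iso}. The problem is item (ii): your entire reduction rests on the claim that every $\Gamma$-orbit in $Y$ is bounded because $Y$ contains no line, and this is precisely the step that the Cheeger--Gromoll line construction does \emph{not} deliver in the noncompact setting. From an unbounded orbit $g_i y\to\infty$ you only obtain geodesics $[y,g_iy]$ subconverging to a ray. To upgrade a ray to a line you must produce geodesics passing uniformly close to a fixed point and extending to infinity in \emph{both} directions; in the classical compact case this is done by recentering via cocompactness of the action, which is exactly what fails here ($X$ is noncompact, and the projected action of $\Gamma$ on $Y$ is certainly not cocompact). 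Concretely, with $a_n=d(y,g^ny)$ the excess $2a_n-a_{2n}$ of the concatenation $[g^{-n}y,y]\cup[y,g^ny]$ need not tend to $0$ (it is only $o(n)$ when $a_n$ grows linearly, and the argument collapses entirely when $a_n$ grows sublinearly), so no line is produced. Since the finiteness of the fibres of $\rho$, the discreteness of $\Gamma_1$, and the identity ``orbit growth order $=m$'' all hinge on this boundedness, the gap propagates through the whole proof; note also that if bounded orbits were available, the statement would follow by soft counting and none of the paper's measure-theoretic machinery would be needed, which should itself be a warning sign.

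What the paper does instead is weaker but sufficient and is the real content of the argument: since $Y$ has no line, there is an $h>0$ such that $\mathrm{Isom}(Y)\cdot B_l(y_0)\subset B_h(y_0)\cup C_h(y_0)$, where $C_h(y_0)$ is the set of points from which no geodesic from $y_0$ extends by more than $h$ (here the ray/line dichotomy \emph{does} apply, because a point far away and outside $C_h(y_0)$ with $h$ large yields a long geodesic through the fixed ball $B_l(y_0)$ extending far in both directions). The orbit in $Y$ may well be unbounded, but it is trapped in this near-cut-locus set, and Lemma \ref{key}(2) --- proved via the Brunn--Minkowski inequality together with nonbranching --- shows $\mathfrak m_Y(C_h(y_0)\cap B_R(y_0))=o(R^{N-k-1})$. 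Combining this with a packing argument in $\mathbb{R}^k\times Y$ gives $\#(D^\Gamma(\tilde p,R))\lesssim R^k+f(R)R^{N-1}$ with $f\to0$, i.e.\ growth of order $<N-1$ whenever $k\le[N]-2$; the cases $k=[N]$ and $k=[N]-1$ are then handled separately (for $Y=[0,T]$ the paper uses Theorem \ref{orbit2} together with linear volume growth of $X$, for $Y=[0,\infty)$ the isometry group of $Y$ is trivial and Bieberbach theory applies to $\Gamma\subset\mathrm{Isom}(\mathbb{R}^k)$, and the identification of $\mathfrak m_Y$ with $c\,\mathcal{L}^1$ is done by a separate Brunn--Minkowski computation, not by the splitting theorem). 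To repair your proposal you would either have to prove the bounded-orbit claim for line-free $\texttt{RCD}(0,N)$ spaces (which is not established anywhere in the paper and for which your sketch does not work), or replace it with the $B_h\cup C_h$ confinement plus the cut-locus volume estimate as the paper does.
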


As in \cite{split} and \cite{Ye}, the key  to proving Theorem \ref{orbit1} is to control the volume growth of the set of points that are close to the cut locus of a base point in some sense (see Lemma \ref{key} (2)). To obtain Lemma \ref{key} (2), we will use the Brunn-Minkowski inequality to replace the volume element comparison  argument used in \cite{Ye}. The nonbranching property of $\texttt{RCD}(K,N)$ spaces is also crucial for our proof.  

We also generalize Theorem 4 in \cite{Ye} (which is the orbit version of Theorem 1.1 in \cite{anderson} )
 to nonbranching $\texttt{CD}^*(K,N)$ spaces:
 
 \begin{theorem}\label{orbit2} Let $(X,d,\mathfrak{m})$ be a nonbranching $\texttt{CD}^*(K,N)$ space.
 	
 (1) Let $p,q\in X,p\neq q$. Put $E(p,q)=\{x=X\mid d(p,x)=d(q,x) \}$. Then $\mathfrak{m}(E(p,q))=0$. 
 
 (2)Let $\pi:\left(\bar{X},\bar{d}, \bar{\mathfrak{m}},\bar{x}_0\right) \rightarrow \left(X,d,\mathfrak{m},x_0\right)$ be a normal covering with deck transformation group $G$ (here $\left(\bar{X},\bar{d}, \bar{\mathfrak{m}}\right)$ is the lift of $\left(X,d,\mathfrak{m}\right)$, cf. 7.2 of \cite{local} or 2.2 of \cite{uni1}). Then for every $r>0$ we have:
 
 	\begin{align}
 	\label{my}	\#(D^G(\bar{x}_0,2r))\cdot \mathfrak{m}(B_r(x_0))&\geq \bar{\mathfrak{m}}(B_r(\bar{x}_0)),\\
 	\label{and}	\#(D^G(\bar{x}_0,r))\cdot \mathfrak{m}(B_r(x_0))&\leq \bar{\mathfrak{m}}(B_{2r}(\bar{x}_0)).
 \end{align}	
 \end{theorem}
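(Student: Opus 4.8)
The plan is to prove (1) first, since it is the analytic core, and then to deduce the two covering estimates in (2) by a Dirichlet-domain (Voronoi) tiling argument in which (1) guarantees that the relevant boundaries are $\bar{\mathfrak m}$-null. Throughout I would write $X = A_p \sqcup E \sqcup A_q$ with $A_p = \{d(p,\cdot) < d(q,\cdot)\}$, $A_q = \{d(q,\cdot) < d(p,\cdot)\}$, and $E = E(p,q)$.

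The decisive step for (1), and the place where non-branching enters, is the following strict-separation claim: if $x \in E$ with $t := d(p,x) = d(q,x)$ and $y$ is any interior point of a geodesic from $p$ to $x$, say $d(p,y) = \theta t$ with $\theta \in (0,1)$, then $d(q,y) > d(p,y)$, i.e. $y \in A_p$. Indeed, the triangle inequality forces $d(q,y) \ge \theta t$, and equality would make $y$ lie on a geodesic from $q$ to $x$ sharing the nontrivial subsegment $[y,x]$ with the geodesic from $p$ to $x$; the nonbranching property then forces the two geodesics to coincide up to length $t$ from $x$, whence $p = q$, a contradiction. Consequently, for any measurable $S \subseteq E$ the whole set $M_\theta$ of $\theta$-midpoints between the point $p$ and $S$ lies in $A_p$, hence is disjoint from $E$. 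Next I would suppose $\mathfrak m(E) > 0$ and choose a Lebesgue density point $x_0 \in E$ (these exist $\mathfrak m$-a.e. since $\texttt{CD}^*(K,N)$ spaces are locally uniformly doubling); note $t_0 := d(p,x_0) > 0$ because $p \notin E$. Fixing small $r$, setting $S = E \cap B_r(x_0)$ and $\theta = \theta(r) = 1 - r/(t_0+r) \to 1$, the distance estimate $d(y,x_0) \le (1-\theta)(t_0+r) + r = 2r$ shows $M_\theta \subseteq B_{2r}(x_0) \cap A_p$, so $\mathfrak m(M_\theta) \le \mathfrak m\!\left(B_{2r}(x_0)\setminus E\right)$.

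The hard part will be balancing this against a lower bound: the Brunn–Minkowski inequality applied to $\{p\}$ and $S$ gives $\mathfrak m(M_\theta) \ge \left(\sigma_{K,N}^{(\theta)}(\Theta)\right)^N \mathfrak m(S)$ with the coefficient tending to $1$ as $r\to 0$. Combining this with the density property $\mathfrak m(E\cap B_\rho(x_0)) = (1-o(1))\mathfrak m(B_\rho(x_0))$ and local doubling $\mathfrak m(B_{2r}(x_0)) \le C_D\,\mathfrak m(B_r(x_0))$ yields $(1-o(1))\mathfrak m(B_r(x_0)) \le o(1)\,C_D\,\mathfrak m(B_r(x_0))$, which is absurd as $r \to 0$; hence $\mathfrak m(E) = 0$. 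I expect this density-point/Brunn–Minkowski balancing, together with the nonbranching separation claim, to be the main obstacle, the rest being routine.

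For (2) I would lift everything to $\bar X$, which is again a nonbranching $\texttt{CD}^*(K,N)$ space, and take the Dirichlet domain $\mathcal F = \{\bar x : \bar d(\bar x,\bar x_0) \le \bar d(\bar x, g\bar x_0)\ \forall g \in G\}$. Its boundary lies in $\bigcup_{g\ne e} E(\bar x_0, g\bar x_0)$, which is $\bar{\mathfrak m}$-null by (1), so $\{g\mathcal F\}$ tiles $\bar X$ with a.e.-disjoint interiors, $\pi|_{\mathcal F}$ is an a.e. bijection onto $X$, and $\bar{\mathfrak m}(S\cap\mathcal F) = \mathfrak m(\pi(S\cap\mathcal F))$ for measurable $S$. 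For \eqref{my}, decompose $B_r(\bar x_0) = \bigsqcup_g \left(B_r(\bar x_0)\cap g\mathcal F\right)$ up to a null set; any contributing $g$ satisfies $g \in D^G(\bar x_0,2r)$, since for $\bar z \in B_r(\bar x_0)\cap g\mathcal F$ one has $\bar d(\bar x_0,g\bar x_0)\le \bar d(\bar x_0,\bar z)+\bar d(\bar z,g\bar x_0)\le 2r$, and each piece has measure at most $\mathfrak m(B_r(x_0))$ because $\pi\!\left(g^{-1}B_r(\bar x_0)\right) = B_r(x_0)$; summing gives \eqref{my}. For \eqref{and}, I would set $\tilde B = \mathcal F\cap B_r(\bar x_0)$, so that $\pi|_{\tilde B}$ is an a.e. bijection onto $B_r(x_0)$ and $\bar{\mathfrak m}(\tilde B) = \mathfrak m(B_r(x_0))$; for $g \in D^G(\bar x_0,r)$ the bound $\bar d(\bar x_0,g\bar w)\le \bar d(\bar x_0,g\bar x_0)+\bar d(\bar x_0,\bar w)\le 2r$ gives $g\tilde B \subseteq B_{2r}(\bar x_0)$, and these translates are a.e. disjoint, so $\bar{\mathfrak m}(B_{2r}(\bar x_0)) \ge \#\!\left(D^G(\bar x_0,r)\right)\mathfrak m(B_r(x_0))$, which is \eqref{and}.
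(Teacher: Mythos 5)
Your argument is correct, and for part (1) it takes a genuinely different route from the paper. The paper proves (1) by observing that nonbranching forces $E(p,q)$ to meet every non-extendable geodesic emanating from $p$ in at most one point, and then invokes Lemma \ref{key}(1), a general statement (any Borel set meeting each such geodesic at most once is null) proved by contracting the whole set toward $p$ via Brunn--Minkowski and exploiting disjointness of the contracted annular pieces; that lemma is reused elsewhere (e.g.\ to show the cut locus is null), which is what the detour buys. Your proof is instead local: the same nonbranching separation observation (interior points of geodesics from $p$ to $E$ lie strictly in $A_p$), combined with a Lebesgue density point of $E$ and a short Brunn--Minkowski contraction that pushes $E\cap B_r(x_0)$ into $B_{2r}(x_0)\setminus E$. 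This is self-contained and arguably more transparent, at the cost of invoking the Lebesgue differentiation theorem in locally doubling spaces. One technical point you should add, which the paper handles explicitly in Lemma \ref{key}: the set $Z_\theta(\{p\},S)$ need not be Borel for Borel $S$, so you should first replace $S$ by a compact subset of nearly full measure via inner regularity of the (Radon) measure $\mathfrak m$, so that $M_\theta$ is compact and both the Brunn--Minkowski lower bound and the inclusion $M_\theta\subseteq B_{2r}(x_0)\setminus E$ can be used as stated. For part (2) your Dirichlet-domain argument establishing \eqref{my} and \eqref{and} is exactly the one the paper sketches (it defines $F=\bigcap_{g\neq e}D_g$, notes $\mathfrak m(\partial F)=0$ by (1), and defers the counting to the cited references), so there is no difference there beyond your writing out the details.
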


It has been well known since \cite{yau2} that an open manifold with nonnegative Ricci curvature has at least linear volume growth (i.e. $\liminf\limits_{R\to\infty}\frac{\text{Vol}(B_R(p))}{R}>0$). This fact also holds for $\texttt{RCD}(0,N)$ spaces (cf. \cite{linear}). So the following definition makes sense:
\begin{definition}
Let $(X,d,\mathfrak{m})$ be a noncompact $\texttt{RCD}(0,N)$ space. We say $X$ has minimal volume growth if and only if 
\begin{equation*}
\limsup\limits_{R\to\infty}\frac{\mathfrak{m}(B_R(p))}{R}<\infty \text{ for some $p\in X.$}
\end{equation*}
\end{definition}

Let $(X,d,\mathfrak{m})$ be a $\texttt{CD}(0,N)$ space.  By the Bishop-Gromov inequality, the limit $\lim\limits_{r\to\infty}\frac{\mathfrak{m}(B_r(p))}{r^N}$  always exists and does not rely on the choice of $p\in X$. We say that $X$ has Euclidean volume growth if $\lim\limits_{r\to\infty}\frac{\mathfrak{m}(B_r(p))}{r^N}>0$. Otherwise, we say that $X$ collapses at infinity.

The following theorem, which generalized Theorem 2 of \cite{Ye}, is an immediate consequence of  Theorem \ref{orbit1} and Theorem \ref{orbit2} (2) (\ref{my}):
\begin{theorem}\label{volume}
Let $(X,d,\mathfrak{m})$ be a noncompact $\texttt{RCD}(0,N)$ space and let $p\in X$. Let $\pi:(\tilde{X},\tilde{d},\tilde{\mathfrak{m}})\rightarrow (X,d,\mathfrak{m})$ be the universal cover. If $X$ has minimal volume growth, then:

\noindent if $N\notin\mathbb{N}_+$, then $\tilde{X}$ collapses at infinity;

\noindent  if $N\in\mathbb{N}_+$, then $\tilde{X}$ has Euclidean volume growth  if and only if $(X,d)$ is a flat $N$-manifold with an $N-1$ dimensional soul or $[0,\infty)\times X_1$, where $X_1$ is a compact flat
$N-1$ manifold.
\end{theorem}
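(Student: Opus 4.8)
The plan is to derive everything by feeding the orbit-growth dichotomy of Theorem \ref{orbit1} into the volume comparison (\ref{my}) of Theorem \ref{orbit2}, applied to the universal cover $\pi:(\tilde X,\tilde d,\tilde{\mathfrak m})\to(X,d,\mathfrak m)$ with deck group $\Gamma$ and base point $\tilde p$ over $p$. Two preliminary facts are used throughout. First, since both $X$ and (by \cite{uni1}) its universal cover $\tilde X$ are $\texttt{RCD}(0,N)$, the Bishop--Gromov inequality guarantees that the densities $\mathfrak{m}(B_r(p))/r^N$ and $\tilde{\mathfrak m}(B_r(\tilde p))/r^N$ are monotone in $r$ and that their limits as $r\to\infty$ exist; this lets me pass freely between statements about $\limsup$/$\liminf$ and statements about the genuine limit. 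Second, minimal volume growth means $\limsup_{r\to\infty}\mathfrak{m}(B_r(p))/r<\infty$, so there is $C>0$ with $\mathfrak{m}(B_r(p))\le Cr$ for all large $r$. Specializing (\ref{my}) to the universal cover reads
\[
\#(D^{\Gamma}(\tilde p,2r))\cdot \mathfrak{m}(B_r(p))\ \ge\ \tilde{\mathfrak m}(B_r(\tilde p)).
\]

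For the non-integer case, divide this inequality by $r^N$ to get
\[
\frac{\tilde{\mathfrak m}(B_r(\tilde p))}{r^N}\ \le\ 2^{N-1}\,\frac{\#(D^{\Gamma}(\tilde p,2r))}{(2r)^{N-1}}\cdot\frac{\mathfrak{m}(B_r(p))}{r}.
\]
By Theorem \ref{orbit1}, when $N\notin\mathbb N_+$ the group $\Gamma$ has polynomial orbit growth of order $<N-1$, i.e. the middle factor tends to $0$; since the last factor stays bounded by minimal volume growth, the left side tends to $0$. As the limit exists and is nonnegative, it equals $0$, which is exactly the assertion that $\tilde X$ collapses at infinity.

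For the integer case I treat the two implications separately. Assume first that $\tilde X$ has Euclidean volume growth, so $\tilde{\mathfrak m}(B_r(\tilde p))\ge c\,r^N$ for some $c>0$ and all large $r$. Plugging this together with $\mathfrak{m}(B_r(p))\le Cr$ into the specialized form of (\ref{my}) yields $\#(D^{\Gamma}(\tilde p,2r))\ge (c/C)\,r^{N-1}$ for all large $r$, hence $\limsup_{r\to\infty}\#(D^{\Gamma}(\tilde p,r))/r^{N-1}>0$. This is precisely the statement that $\Gamma$ fails to have polynomial orbit growth of order $<N-1$, so the characterization in Theorem \ref{orbit1} forces $(X,d)$ to be a flat $N$-manifold with an $(N-1)$-dimensional soul or $[0,\infty)\times X_1$ with $X_1$ compact flat. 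Conversely, if $(X,d)$ is one of these two model spaces then its universal cover is, respectively, $\mathbb R^N$ or the half-space $[0,\infty)\times\mathbb R^{N-1}$ (the Bieberbach cover of $X_1$ being $\mathbb R^{N-1}$), each carrying a constant multiple of the Lebesgue measure; a direct computation shows that the density limit $\tilde{\mathfrak m}(B_r(\tilde p))/r^N$ is a positive constant (a full ball in the first case, an asymptotic half-ball in the second), so $\tilde X$ has Euclidean volume growth.

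The computations here are routine; the only point requiring a little care is the interplay between $\liminf$/$\limsup$ and genuine limits, which is why I invoke Bishop--Gromov at the outset. The real content is carried entirely by Theorem \ref{orbit1}: the crucial recognition is that the condition ``$\Gamma$ fails to have polynomial orbit growth of order $<N-1$'' appearing in its rigidity statement is exactly what the volume comparison (\ref{my}) produces from Euclidean volume growth of $\tilde X$ under the minimal-growth hypothesis on $X$.
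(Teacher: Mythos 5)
Your proposal is correct and follows exactly the route the paper intends: the paper gives no separate proof of Theorem \ref{volume}, declaring it an immediate consequence of Theorem \ref{orbit1} and the inequality (\ref{my}) of Theorem \ref{orbit2}, and your argument is precisely that derivation (divide (\ref{my}) by $r^N$, feed in the orbit-growth dichotomy, and use Bishop--Gromov to convert $\limsup$ statements into genuine limits). The only step you gloss over slightly is why, in the converse integer case, the lifted measure on $\mathbb{R}^N$ or $[0,\infty)\times\mathbb{R}^{N-1}$ must be a multiple of Lebesgue measure; this follows from iterating the splitting theorem (Theorem \ref{splitting}) together with Remark \ref{1}, consistent with the paper's Case 1/Case 2 analysis.
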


Theorem \ref{volume} can be viewed as a generalization of  the rigidity result of Theorem \ref{rigidity}. Indeed, by Theorem \ref{orbit2}, when $N\in\mathbb{N}_+$, $b_1(X)=N-1$ implies that $X$ has minimal volume growth and that $\tilde{X}$ has Euclidean volume growth.

\section{Preliminaries}
In this section, we list the properties of $\texttt{CD}^*(K,N)$ spaces and $\texttt{RCD}^*(K,N)$ spaces that will be used in this paper.   We will not give  the definitions here, since they will  not be used explicitly. 

\paragraph{1}When $K=0$, the $\texttt{CD}^*(0,N)$
condition is the same as the $\texttt{CD}(0,N)$ condition.

\paragraph{2}A $\texttt{CD}^*(K,N)$ space $(X,d,\mathfrak{m})$  is a proper geodesic space (cf. \cite{BM} Lemma 3.5).  
\paragraph{3} An $\texttt{RCD}^*(K,N)$
space   is an infinitesimally Hilbertian  $\texttt{CD}^*(K,N)$ space (cf. \cite{uni1} and related reference there for the definition of the infinitesimally Hilbertian condition). Especially, an  $\texttt{RCD}^*(K,N)$
space is $\texttt{CD}^*(K,N)$.
\paragraph{4}For $K\in\mathbb{R}, N\geq1,0\leq t\leq 1,$ and $\theta\in\mathbb{R}_+$, set
$$\sigma_{K, N}^{(t)}(\theta):= \begin{cases}\infty & \text { if } K \theta^2 \geq N \pi^2, \\ 
\frac{\sin(\sqrt{\frac{K}{N}}t\theta)}{\sin(\sqrt{\frac{K}{N}}\theta)}	& \text { if } K>0,K \theta^2 <N \pi^2,\\
t& \text{ if } K=0,\\
	\frac{\sinh(\sqrt{-\frac{K}{N}}t\theta)}{\sinh(\sqrt{-\frac{K}{N}}\theta)} & \text { if }  K<0. \end{cases}$$

We will use the following Brunn-Minkowski inequality to prove Lemma \ref{key}:
\begin{theorem}
(\cite{local}Proposition 6.1) Let $K,N\in\mathbb{R}$ and let $N\geq 1$. Assume that $(X,d,\mathfrak{m})$ is a $\texttt{CD}^*(K,N)$ space. Then for all Borel sets $A,B\subset M$ and $t\in[0,1]$, 
\begin{equation*}
\mathfrak{m}(Z_t(A,B))^{\frac{1}{N}}\geq  \sigma^{(1-t)}_{K,N}(\Theta)\cdot \mathfrak{m}(A)^{\frac{1}{N}}+\sigma_{K,N}^{(t)}\cdot \mathfrak{m}(B)^{\frac{1}{N}},
\end{equation*}
where $Z_{t}(A,B)=\{x\in X\mid \text{there exist } a\in A \text{ and } b\in B \text{ such that  }  d(a,x)=td(a,b) \text{ and } d(b,x)=(1-t)d(a,b).    \}$ and where 
$$\Theta:= \begin{cases}\inf _{x_0 \in A, x_1 \in B} \mathrm{~d}\left(x_0, x_1\right), & K \geqslant 0 ,\\ \sup _{x_0 \in A, x_1 \in B} \mathrm{~d}\left(x_0, x_1\right), & K<0.\end{cases}$$
\end{theorem}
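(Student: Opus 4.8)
The plan is to read off the inequality directly from the defining convexity property of the reduced curvature–dimension condition, tested against the normalized uniform measures on $A$ and $B$. First I would reduce to the case $0<\mathfrak{m}(A),\mathfrak{m}(B)<\infty$ with $A,B$ bounded: unbounded sets are handled by exhausting them with bounded pieces and using monotone convergence, noting $Z_t(A',B')\subseteq Z_t(A,B)$ for $A'\subseteq A$, $B'\subseteq B$, while the degenerate cases $\mathfrak{m}(A)=0$ or $\mathfrak{m}(B)=0$ follow by replacing the offending set by a small ball around one of its points (legitimate since $\operatorname{supp}\mathfrak{m}=X$) and letting the radius tend to zero, so that the corresponding term on the right drops out. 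Having made this reduction, set
\[
\mu_0=\tfrac{1}{\mathfrak{m}(A)}\,\mathbf{1}_A\,\mathfrak{m},\qquad \mu_1=\tfrac{1}{\mathfrak{m}(B)}\,\mathbf{1}_B\,\mathfrak{m},
\]
two absolutely continuous probability measures of bounded support with constant densities $\rho_0\equiv\mathfrak{m}(A)^{-1}$ on $A$ and $\rho_1\equiv\mathfrak{m}(B)^{-1}$ on $B$. Throughout I write the reduced R\'enyi entropy of $\mu=\rho\,\mathfrak{m}+\mu^s$ as $\mathcal{S}_N(\mu\mid\mathfrak{m})=-\int_X\rho^{\,1-1/N}\,d\mathfrak{m}$.

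By the $\texttt{CD}^*(K,N)$ condition there is an optimal coupling $\pi$ of $\mu_0,\mu_1$ and an associated $W_2$-geodesic $(\mu_t)_{t\in[0,1]}$, realized as a displacement interpolation, along which
\[
\mathcal{S}_N(\mu_t\mid\mathfrak{m})\le -\int_{X\times X}\Big[\sigma_{K,N}^{(1-t)}(d(x_0,x_1))\,\rho_0(x_0)^{-1/N}+\sigma_{K,N}^{(t)}(d(x_0,x_1))\,\rho_1(x_1)^{-1/N}\Big]\,d\pi(x_0,x_1).
\]
Two observations convert this into the statement. First, because $(\mu_t)$ is a displacement interpolation, $\mu_t=(e_t)_\#\Pi$ for an optimal dynamical plan $\Pi$ concentrated on geodesics running from $A$ to $B$; for any such geodesic $\gamma$ the point $\gamma(t)$ lies in $Z_t(A,B)$ by definition, so $\mu_t$ is concentrated on $Z_t(A,B)$ and its density $\rho_t$ vanishes $\mathfrak{m}$-a.e.\ outside $Z_t(A,B)$. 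Second, applying H\"older's inequality with exponents $\tfrac{N}{N-1}$ and $N$ on $Z_t(A,B)$ gives
\[
-\mathcal{S}_N(\mu_t\mid\mathfrak{m})=\int_{Z_t(A,B)}\rho_t^{\,1-1/N}\,d\mathfrak{m}\le\Big(\int\rho_t\,d\mathfrak{m}\Big)^{1-1/N}\mathfrak{m}(Z_t(A,B))^{1/N}\le \mathfrak{m}(Z_t(A,B))^{1/N},
\]
the last inequality using $\int\rho_t\,d\mathfrak{m}\le\mu_t(X)=1$.

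It remains to bound the integrand on the right of the entropy inequality. Substituting the constant densities yields $\rho_0(x_0)^{-1/N}=\mathfrak{m}(A)^{1/N}$ and $\rho_1(x_1)^{-1/N}=\mathfrak{m}(B)^{1/N}$ for $\pi$-a.e.\ $(x_0,x_1)\in A\times B$. A direct inspection of the four cases defining $\sigma_{K,N}^{(t)}$ shows that for fixed $t\in(0,1)$ the map $\theta\mapsto\sigma_{K,N}^{(t)}(\theta)$ is nondecreasing when $K\ge 0$ and nonincreasing when $K<0$; matched with the definition of $\Theta$ (an infimum of $d(x_0,x_1)$ for $K\ge 0$, a supremum for $K<0$) this gives $\sigma_{K,N}^{(t)}(d(x_0,x_1))\ge\sigma_{K,N}^{(t)}(\Theta)$ on $\operatorname{supp}\pi$, and likewise for the $(1-t)$ coefficient. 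Since $\pi$ is a probability measure, integrating these pointwise bounds and combining with the two displays above produces
\[
\mathfrak{m}(Z_t(A,B))^{1/N}\ge\sigma_{K,N}^{(1-t)}(\Theta)\,\mathfrak{m}(A)^{1/N}+\sigma_{K,N}^{(t)}(\Theta)\,\mathfrak{m}(B)^{1/N},
\]
which is the assertion. The main obstacle I anticipate is not any single estimate but the careful justification of the two structural inputs: that the optimal interpolation may be taken to be a displacement interpolation concentrated on $Z_t(A,B)$ (so that the H\"older step is measured against $\mathfrak{m}(Z_t(A,B))$ rather than all of $X$, which requires that $Z_t(A,B)$ be at least universally measurable as the projection of a Borel set), and the clean verification of the monotonicity of $\sigma_{K,N}^{(t)}$ together with the correct sign of $\Theta$ in each regime. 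The limiting arguments for unbounded or null sets are routine but must be arranged so as not to degrade the exponent $1/N$.
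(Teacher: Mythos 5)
The paper itself offers no proof of this statement: it is imported verbatim as Proposition~6.1 of the cited reference \cite{local}, so there is nothing internal to compare against. Your argument is the standard derivation of the Brunn--Minkowski inequality from the $\texttt{CD}^*(K,N)$ entropy convexity, and it is essentially the proof given in that reference: test the defining inequality on the normalized restrictions $\mu_0,\mu_1$, observe that the interpolant is concentrated on $Z_t(A,B)$, apply H\"older with exponents $N/(N-1)$ and $N$, and use the monotonicity of $\theta\mapsto\sigma_{K,N}^{(t)}(\theta)$ (nondecreasing for $K\ge 0$, nonincreasing for $K<0$, both verifiable by differentiating $t\cos(ts)\sin s-\sin(ts)\cos s$) together with the sign convention in the definition of $\Theta$. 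All of these steps are correct, including the remark that $Z_t(A,B)$ is only analytic, hence universally measurable.

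The one place where your sketch is genuinely too quick is the degenerate case $\mathfrak{m}(B)=0$ --- and this is precisely the case the paper actually uses, since Lemma~1 and Remark~2 apply the inequality with $B=\{p\}$ a single point. Your proposed fix, replacing $B$ by $B_\varepsilon(b)$ and letting $\varepsilon\to 0$, does not immediately work because $Z_t(A,B_\varepsilon(b))$ is \emph{not} contained in $Z_t(A,\{b\})$; the inclusion goes the wrong way, so the term ``drops out'' only after you show that $\mathfrak{m}\bigl(Z_t(A,B_\varepsilon(b))\bigr)\to\mathfrak{m}\bigl(Z_t(A,\{b\})\bigr)$. This requires first reducing to $A$ compact by inner regularity, then arguing that the nested intersection $\bigcap_\varepsilon Z_t(A,B_\varepsilon(b))$ is contained in $Z_t(A,\{b\})$ via a convergent-subsequence argument in the proper space $X$, and finally using continuity of the finite measure along decreasing sets. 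Alternatively, if one adopts the formulation of $\texttt{CD}^*(K,N)$ valid for arbitrary marginals in $P_2$, one may take $\mu_1=\delta_b$ directly and the singular part simply contributes nothing to the right-hand side. Either route closes the gap, but as written the limiting argument is asserted rather than arranged, and it is the only step that is not routine.
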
     

\paragraph{5}We also need the splitting theorem in the nonsmooth setting, which generalized the celebrated Cheeger-Gromoll Splitting Theorem  \cite{split}:
\begin{theorem}\label{splitting}
	 (\cite{gigli})Let $(X,d,m)$ be an \texttt{RCD}$(0,N)$ space with $1\leq N<\infty$. Suppose that $X$ contains a line. Then $(X,d,m)$ is isomorphic to $(X^\prime\times\mathbb{R}, d^\prime\times d_{E},m^\prime\times \mathcal{L}^1)$, where $d_E$ is the Euclidean distance, $\mathcal{L}^1$ the Lebesgue measure and $(X^\prime,d^\prime,m^\prime)$ is an \texttt{RCD}$(0,N-1)$ space if $N\geq2$ and a singleton  if $N<2$.
\end{theorem}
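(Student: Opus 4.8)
The plan is to adapt the Cheeger--Gromoll argument to the synthetic setting, as Gigli does. Fix a line $\gamma:\mathbb{R}\to X$ and define the two Busemann functions $b^{\pm}(x):=\lim_{t\to\infty}\bigl(t-d(x,\gamma(\pm t))\bigr)$; since a $\texttt{CD}(0,N)$ space is proper and geodesic the limits exist and the $b^{\pm}$ are $1$-Lipschitz. The first goal is to show that both Busemann functions are harmonic and that $b^{+}+b^{-}\equiv 0$.

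To this end I would invoke the Laplacian comparison for distance functions on $\texttt{CD}(0,N)$ spaces (a measure-valued consequence of Bishop--Gromov), giving $\Delta b^{\pm}\le 0$ in the weak/distributional sense. Hence $b^{+}+b^{-}$ is subharmonic; the triangle inequality gives $b^{+}+b^{-}\ge 0$ everywhere, with equality along $\gamma$. Because an $\texttt{RCD}(0,N)$ space is infinitesimally Hilbertian, its Laplacian is linear and the strong maximum principle applies, forcing $b^{+}+b^{-}\equiv 0$; then $\Delta b^{\pm}\ge 0$ as well, so both are harmonic, and a standard comparison of the two Busemann functions yields $|\nabla b^{+}|=1$ $\mathfrak{m}$-a.e.

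The decisive step is the Bochner (Bakry--\'Emery) inequality for $\texttt{RCD}(0,N)$ spaces in its self-improved form, which carries the Hessian term. Writing $b=b^{+}$, one has $|\nabla b|^2\equiv 1$ and $\Delta b=0$; substituting these into
\begin{equation*}
\tfrac12\,\Delta|\nabla b|^2\ \ge\ |\mathrm{Hess}\,b|^2+\langle\nabla b,\nabla(\Delta b)\rangle+\frac{(\Delta b)^2}{N}
\end{equation*}
and testing against a suitable nonnegative function makes the left-hand side and the last two right-hand terms vanish, forcing $\mathrm{Hess}\,b\equiv 0$; thus $b$ is affine. The final and hardest step is to upgrade the affine function $b$ into an actual metric-measure splitting $X\cong X'\times\mathbb{R}$: one integrates the gradient field of $b$ to produce a flow of measure-preserving isometries generating an isometric $\mathbb{R}$-action whose orbits are unit-speed lines, sets $X'=b^{-1}(0)$ with the restricted distance and the disintegrated measure, and verifies by a disintegration/Fubini argument that $\mathfrak{m}=\mathfrak{m}'\times\mathcal{L}^1$. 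I expect this last step to be the main obstacle: in the nonsmooth category ``integrating the gradient field'' requires the theory of regular Lagrangian (gradient) flows, and one must separately confirm that the factor $(X',d',\mathfrak{m}')$ is again $\texttt{RCD}(0,N-1)$ (the dimension drop, degenerating to a point when $N<2$), which is where the tensorization and dimension-reduction machinery of the synthetic theory enters.
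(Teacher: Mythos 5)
This statement is not proved in the paper at all: it is quoted verbatim as Gigli's splitting theorem (\cite{gigli}) and used as a black box, so there is no ``paper's proof'' to compare against. Your outline is essentially the strategy of Gigli's actual proof (Busemann functions, Laplacian comparison, linearity of the Laplacian from infinitesimal Hilbertianity, strong maximum principle, self-improved Bochner inequality to kill the Hessian, and then the gradient-flow construction of the isometric $\mathbb{R}$-action together with disintegration of the measure), so as a sketch of the cited result it is on target.

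Two remarks. First, a sign slip: with your convention $b^{\pm}(x)=\lim_{t\to\infty}\bigl(t-d(x,\gamma(\pm t))\bigr)$, the triangle inequality $d(x,\gamma(t))+d(x,\gamma(-t))\geq 2t$ gives $b^{+}+b^{-}\leq 0$ (not $\geq 0$), and the Laplacian comparison $\Delta d\leq (N-1)/d$ gives $\Delta b^{\pm}\geq 0$, i.e.\ the $b^{\pm}$ are subharmonic; the argument then closes by applying the strong maximum principle to the subharmonic function $b^{+}+b^{-}$, which attains an interior maximum along $\gamma$. As written, your inequalities are reversed relative to your own definition, though the logical template is the correct one. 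Second, you correctly identify but do not carry out the steps that constitute the real content of Gigli's work: the measure-valued Laplacian comparison and maximum principle in the nonsmooth setting, the self-improvement of the Bochner inequality to include the Hessian term, the construction of the flow of $\nabla b$ as measure-preserving isometries via regular Lagrangian flows, and the verification that the quotient $(X',d',\mathfrak{m}')$ is again $\texttt{RCD}(0,N-1)$ (respectively a point when $N<2$). Deferring these is acceptable for a sketch of a cited theorem, but none of them is routine, and the proposal should not be read as a self-contained proof.
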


\paragraph{6}We require the existence  of a simply connected universal cover: 
\begin{theorem}(\cite{uni1},\cite{uni2})\label{univer}
	Let $(X,d,m)$ be an $\texttt{RCD}^*(K,N)$-space for some  $K\in\mathbb{R},1< N<\infty$. Then $(X,d,m)$ admits a simply connected universal cover $(\tilde{X},\tilde{d},\tilde{m})$ which is itself an $\texttt{RCD}^*(K,N)$-space.  
\end{theorem}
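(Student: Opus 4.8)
The plan is to establish the theorem in three stages: the existence of a universal cover as a topological covering, the transfer of the $\texttt{RCD}^*(K,N)$ structure onto it, and finally its simple connectedness. The first two stages follow the scheme of Mondino--Wei \cite{uni1}, while the third reduces, via classical covering space theory, to the semi-local simple connectedness established by Wang \cite{uni2}.

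First I would produce the universal cover by the $\delta$-cover technique (in the form used in \cite{uni1}). For each $\delta>0$ one takes the cover $\tilde X^{\delta}\to X$ associated with the subgroup of the fundamental group generated by classes of loops contained in metric balls of radius $\delta$. These covers form an inverse system as $\delta\downarrow 0$, and the universal cover, i.e. the connected cover dominating every other connected cover, exists precisely when this system stabilizes. The regularity needed for this construction to be meaningful is available here: since a $\texttt{CD}^*(K,N)$ space is a proper geodesic space (\cite{BM}), every metric ball is path-connected --- the geodesic from the center to any point of $B_r(x)$ stays inside $B_r(x)$ --- so $X$ is connected and locally path-connected.

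On the cover $\tilde X$ I would place the lifted length metric $\tilde d$, for which the projection $\pi$ is a local isometry, and the measure $\tilde{\mathfrak m}$ obtained by pulling $\mathfrak m$ back through these local isometries on each sheet, which is globally consistent because $\pi$ is a regular covering. The essential point is then that the synthetic curvature condition both descends to and ascends from such a cover. Infinitesimal Hilbertianity is a local property of the Cheeger energy and is therefore inherited by $(\tilde X,\tilde d,\tilde{\mathfrak m})$. For the dimensional curvature bound I would use that $\texttt{CD}^*(K,N)$ satisfies a local-to-global property on nonbranching spaces (\cite{local}, recalling that $\texttt{RCD}$ spaces are nonbranching by \cite{nb}): since $\pi$ is a local isometry and $(X,d,\mathfrak m)$ is $\texttt{CD}^*(K,N)$, the cover is locally $\texttt{CD}^*(K,N)$, hence globally $\texttt{CD}^*(K,N)$. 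Combined with infinitesimal Hilbertianity this yields the $\texttt{RCD}^*(K,N)$ structure on $\tilde X$.

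It remains to show that $\tilde X$ is simply connected, and here the crux is the purely topological statement that $X$ is \emph{semi-locally simply connected}, i.e. each point has a neighborhood in which every loop is null-homotopic in $X$. Granting this (Wang \cite{uni2}), the $\delta$-covers stabilize for small $\delta$ and the resulting universal cover has trivial fundamental group by the standard theory for connected, locally path-connected, semi-locally simply connected spaces. The hard part is semi-local simple connectedness itself, which does not follow formally from the curvature bound: I would expect it to require the fine local structure of $\texttt{RCD}^*(K,N)$ spaces --- their rectifiability, a dense set of regular points with Euclidean tangent cones, and the metric-cone description of tangent cones --- together with a blow-up and compactness argument showing that a loop in a sufficiently small ball can be contracted within a controlled larger ball. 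This is where essentially all the analytic content of \cite{uni1} and \cite{uni2} is concentrated, and it is the step I anticipate to be the main obstacle.
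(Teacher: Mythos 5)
The paper offers no proof of this statement: it is quoted in the Preliminaries as a known result of \cite{uni1} and \cite{uni2}, so there is no internal argument to compare against. Your outline is a faithful reconstruction of how those references proceed --- the $\delta$-cover construction and the lifting of the metric measure structure with locality of both infinitesimal Hilbertianity and the $\texttt{CD}^*(K,N)$ condition (via local-to-global on nonbranching spaces) come from \cite{uni1}, and the reduction of simple connectedness of $\tilde X$ to semi-local simple connectedness of $X$ is exactly the content of \cite{uni2} --- and you correctly identify that essentially all of the analytic difficulty sits in the stabilization of the $\delta$-covers and in Wang's semi-local simple connectedness, which you defer to the references rather than reprove. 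As a review of a proof that the paper itself delegates to the literature, this is an accurate account of the strategy, though of course it is a plan rather than a self-contained proof.
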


\paragraph{7} We say that a geodesic metric space $(X,d)$ is nonbranching if and  only if there are no 4  different points $x,y,z,w$ in $X$ such that $d(x,z)=d(x,y)+d(y,z)$ and $d(x,w)=d(x,y)+d(y,w)$.

\begin{theorem}(\cite{nb}) Let $(X,d,\mathfrak{m})$ be an $\texttt{RCD}(K,N)$ space. Then $(X,d,\mathfrak{m})$ is nonbranching.
\end{theorem}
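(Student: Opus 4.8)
The plan is to prove non-branching by a blow-up analysis, following the strategy behind Deng's theorem: one shows that in an $\texttt{RCD}(K,N)$ space a geodesic is rigidly recognizable from its tangent cones, so two geodesics sharing an initial segment cannot later separate. Non-branching is local and scale-invariant, and by paragraph 2 the space is proper and geodesic, so it suffices to exclude the following configuration: two unit-speed geodesics $\gamma,\sigma\colon[0,L]\to X$ with $\gamma|_{[0,t_0]}=\sigma|_{[0,t_0]}$ for some $0<t_0<L$, but with $d(\gamma(t),\sigma(t))>0$ for all $t$ slightly larger than $t_0$. Since the lower bound $K$ plays no role after rescaling, I would work with tangent cones, which are blow-ups and hence $\texttt{RCD}(0,N)$ spaces.

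As a warm-up I would first record the \emph{essential} non-branching statement, which is comparatively soft: the Brunn--Minkowski inequality of paragraph 4, together with the propagation of absolute continuity of intermediate measures along $L^2$-Wasserstein geodesics (Rajala--Sturm), forces the optimal geodesic plan between absolutely continuous measures to be carried by a family of geodesics that do not branch for a.e.\ pair of endpoints. This already rules out branching off a full-measure set, but crucially says nothing about the single fixed pair $(\gamma,\sigma)$ above; the pointwise statement genuinely requires the tangent-cone machinery.

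To treat the fixed pair, set $x_0:=\gamma(t_0)$ and blow up $X$ at $x_0$. Since $0<t_0<L$, both $\gamma$ and $\sigma$ extend on either side of $x_0$, so along a common subsequence of rescalings the pointed-measured-Gromov--Hausdorff limit is a tangent cone $(C_{x_0},o)$ to which each geodesic converges as a complete geodesic \emph{line} through the apex. By the splitting theorem (paragraph 5), applied on $C_{x_0}$ as an $\texttt{RCD}(0,N)$ space, one gets an isometric splitting $C_{x_0}\cong\mathbb{R}\times C'$ in which $\gamma$ blows up to the $\mathbb{R}$-factor: its incoming half blows up to the negative ray $\{(-s,o')\}$ and its outgoing half to $\{(+s,o')\}$. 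Now $\sigma$ shares the incoming half with $\gamma$, so $\sigma$ blows up to the \emph{same} negative ray; since geodesics in a metric product split into geodesics of the factors with constant component speeds, a line in $\mathbb{R}\times C'$ whose negative half lies along the $\mathbb{R}$-axis must coincide with the whole $\mathbb{R}$-axis. Hence $\sigma$'s outgoing half blows up to $\{(+s,o')\}$ as well, and the rescaled outgoing directions of $\gamma$ and $\sigma$ agree in $C_{x_0}$.

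The remaining and decisive step is to upgrade this coincidence of blow-up directions to coincidence of the geodesics at small positive scale, and I expect the genuine difficulty to lie here. The enabling input is Deng's quantitative estimate that tangent cones vary Hölder-continuously along a geodesic, the pointed-measured-Gromov--Hausdorff distance between the cones at $\gamma(s)$ and $\gamma(s')$ being controlled by $|s-s'|^{\alpha}$ for a dimensional exponent $\alpha$. This regularity makes the assignment of a tangent direction to each point of a geodesic well-behaved enough to run an ODE-type uniqueness argument, recovering a geodesic as the essentially unique curve compatible with this direction field; two geodesics issuing from $x_0$ with the same blow-up direction must then agree, contradicting $d(\gamma(t),\sigma(t))>0$ for $t>t_0$. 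Proving the Hölder estimate itself is the crux: it draws on the full regularity theory of $\texttt{RCD}(K,N)$ spaces — rectifiability, constancy of the essential dimension, and uniform control of blow-up geometry — together with an almost-monotonicity and iteration scheme bounding the rate at which the cone can change. Everything downstream of this estimate, namely the splitting at $x_0$ and the uniqueness of the line-continuation, is comparatively routine, so the real work should be concentrated on the Hölder continuity of tangent cones.
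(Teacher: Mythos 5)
This statement is not proved in the paper at all: it is quoted verbatim from Deng's work \cite{nb} as a black box in the Preliminaries, so there is no in-paper argument to compare against. Judged on its own terms, your proposal correctly identifies the architecture of Deng's actual proof (reduction to a single branching pair, blow-up at the branch point, splitting of the tangent cone so that the common incoming half forces a common outgoing line, and then an upgrade from infinitesimal to local coincidence driven by H\"older continuity of tangent cones along geodesics in the spirit of Colding--Naber). The soft preliminaries are also fine: essential non-branching from Brunn--Minkowski/Rajala--Sturm, the fact that an interior point of a geodesic produces a line in any tangent cone, and the observation that a geodesic line in a metric product $\mathbb{R}\times C'$ has constant component speeds, so a line whose negative half is the $\mathbb{R}$-axis is the whole $\mathbb{R}$-axis.

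However, as a proof the proposal has a genuine gap, and you have located it yourself: everything rests on the H\"older continuity of tangent cones along a geodesic, which you invoke as ``Deng's quantitative estimate'' --- i.e.\ you are citing the main theorem of the very paper whose result you are asked to prove. That estimate is the entire technical content (heat-kernel approximations of distance functions, integral Hessian bounds along geodesics, an iteration scheme), and none of it appears in your sketch. The final step is also underspecified: deducing pointwise non-branching from H\"older continuity of tangent cones is not a literal ODE uniqueness statement (there is no direction field on the space, only a cone at each point, and tangent cones need not be unique, so ``same blow-up direction'' a priori only holds along a subsequence of scales); Deng's actual deduction is a separate quantitative contradiction argument. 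So the proposal is a correct roadmap of the known proof rather than a proof; for the purposes of this paper the honest move is exactly what the author does, namely to cite \cite{nb}.
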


\paragraph{8}Finally, we will use the following splitting result for metric isometry groups. It was used in \cite{split} under the condition that $N$ is a Riemannian manifold. 

\begin{prop}\label{iso}
Let $(N,d)$ be a metric space which contains no line and let $k\in\mathbb{N}_+$. Then any isometry $F$ of the metric product $N\times \mathbb{R}^k$ splits as $F=(f,g)$, where $f$ is an isometry on $N$ and $g$ is an isometry on $\mathbb{R}^k$.
\end{prop}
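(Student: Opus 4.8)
The plan is to exploit the fact that an isometry must permute the lines of the space, together with the hypothesis that $N$ contains no line, which will force every line of $N\times\mathbb{R}^k$ to be ``vertical''. First I would classify the lines: I claim that a curve $\gamma(t)=(\alpha(t),\beta(t))$ is a line in $N\times\mathbb{R}^k$ (i.e. $d(\gamma(s),\gamma(t))=|s-t|$ for all $s,t$) if and only if $\alpha$ is constant and $\beta$ is a unit-speed affine line in $\mathbb{R}^k$. Granting this, the union of all lines passing through a point $(x_0,u_0)$ is exactly the fiber $\{x_0\}\times\mathbb{R}^k$ (here $k\ge1$ is used, so that the affine lines through $u_0$ already cover $\mathbb{R}^k$). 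Since $F$ and $F^{-1}$ both carry lines to lines, $F$ carries the union of lines through a point to the union of lines through its image; hence $F$ maps each fiber $\{x_0\}\times\mathbb{R}^k$ onto a single fiber $\{x_1\}\times\mathbb{R}^k$. This defines a bijection $f\colon N\to N$ with $f(x_0)=x_1$ and writes $F(x,u)=(f(x),G(x,u))$, where $u\mapsto G(x,u)$ is an isometry of $\mathbb{R}^k$ for each fixed $x$.

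For the line classification I would write $A(s,t)=d_N(\alpha(s),\alpha(t))$ and $B(s,t)=|\beta(s)-\beta(t)|$, so that being a line gives $A(s,t)^2+B(s,t)^2=(t-s)^2$ for $s<t$. For $s<u<t$ the identity $d(\gamma(s),\gamma(t))=d(\gamma(s),\gamma(u))+d(\gamma(u),\gamma(t))$, combined with the triangle inequalities in $N$ and in $\mathbb{R}^k$ and with Minkowski's inequality in $\mathbb{R}^2$, forces every intermediate inequality to be an equality. This yields the additivity $A(s,t)=A(s,u)+A(u,t)$ and $B(s,t)=B(s,u)+B(u,t)$, together with positive proportionality of $(A(s,u),B(s,u))$ and $(A(u,t),B(u,t))$. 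A chaining argument then shows the normalized direction $(A(s,t),B(s,t))/(t-s)$ is a fixed unit vector $(\cos\theta,\sin\theta)$ independent of $s,t$, whence $A(s,t)=(t-s)\cos\theta$ and $B(s,t)=(t-s)\sin\theta$. If $\cos\theta>0$, reparametrizing $\alpha$ by arclength produces an isometric embedding $\mathbb{R}\hookrightarrow N$, i.e. a line in $N$, contradicting the hypothesis; hence $\cos\theta=0$, so $\alpha$ is constant and $\beta$ is a line, as claimed.

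It then remains to show that the $\mathbb{R}^k$-part $G$ is independent of the $N$-coordinate. For each $x$ I would write the fiber isometry as $G(x,u)=O_xu+c_x$ with $O_x\in O(k)$ and $c_x\in\mathbb{R}^k$. Imposing $d(F(x,u),F(x',u'))=d((x,u),(x',u'))$ gives, for all $u,u'\in\mathbb{R}^k$, the polynomial identity
\begin{equation*}
d_N(x,x')^2+|u-u'|^2=d_N(f(x),f(x'))^2+|O_xu+c_x-O_{x'}u'-c_{x'}|^2 .
\end{equation*}
Matching the quadratic terms in $(u,u')$ forces $O_x^{\mathsf T}O_{x'}=\mathrm{Id}$, hence $O_x\equiv O$ is constant; matching the linear terms then forces $c_x\equiv c$ constant; and the surviving constant terms give $d_N(f(x),f(x'))=d_N(x,x')$. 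Therefore $g(u):=Ou+c$ is a fixed isometry of $\mathbb{R}^k$, $f$ is an isometry of $N$, and $F(x,u)=(f(x),g(u))$.

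I expect the conceptual core, and the main obstacle, to be the line classification, since that is where the no-line hypothesis on $N$ and the Euclidean geometry of the $\mathbb{R}^k$ factor must interact; in particular the chaining argument that upgrades the local proportionality of successive segments into a single globally constant direction requires care. By contrast, once the fibers are known to be preserved, the splitting of $F$ into $(f,g)$ is a routine identification of Euclidean isometries by comparison of quadratic polynomials, the only subtlety being to verify that both the orthogonal part $O_x$ and the translation part $c_x$ are genuinely independent of $x$.
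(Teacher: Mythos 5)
Your proposal is correct, but it takes a genuinely different route from the paper's after the first step. Both arguments begin by observing that $F$ must permute the fibers $\{x\}\times\mathbb{R}^k$, because these are exactly the unions of lines through a point and $N$ contains no line; you prove the underlying classification of lines in the product in detail (via the equality case of Minkowski's inequality and a chaining argument for the constant direction), whereas the paper asserts this in a single sentence. From there the paper proceeds purely metrically: it shows that $F$ also preserves the horizontal slices $N\times\{v\}$ by using that $(p,v)$ is the unique point of the fiber $\{p\}\times\mathbb{R}^k$ nearest to $(n,v)$ (a foot-of-perpendicular argument valid in any metric product), and then identifies $F(x,y)=(f(x),g(y))$ by two more applications of the same uniqueness of nearest points. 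You instead write the fiberwise map as an affine isometry $G(x,u)=O_xu+c_x$ of $\mathbb{R}^k$ and match coefficients of the resulting polynomial identity in $(u,u')$ to conclude that $O_x$ and $c_x$ are independent of $x$, recovering the isometry property of $f$ from the constant terms. Your route leans on the Euclidean structure of the second factor (the affine form of isometries of $\mathbb{R}^k$), while the paper's closest-point argument is softer and would work with other factors in place of $\mathbb{R}^k$; on the other hand, your version makes fully explicit the line classification that the paper leaves implicit. Both are complete proofs.
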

We will give a proof of Proposition \ref{iso} in the Appendix.

\section{Proofs}
In this section, we will first prove Theorem \ref{orbit2} and then prove Theorem \ref{orbit1}, since our proof of Theorem \ref{orbit1} slightly uses Theorem \ref{orbit2}.

Let $(X,d)$ be a proper geodesic space and let $p\in X$. Put
\begin{align*}
C_r(p)=&\{q\in X\mid \forall z\in X\backslash B_r(q)  , d(p,q)+d(q,z)>d(p,z)   \}.
\end{align*}
Then $C_r(p)$ is open in X. The set $C(p):=\bigcap\limits_{i=1}^\infty C_{i^{-1}}(p)$ is called the cut locus of $p$. 

 Let $I=[0,l]$ or $[0,\infty)$, where $0<l<\infty$. We say that a geodesic  $\gamma:I\rightarrow X$ with $\gamma(0)=p$ is non-extendable (relative to $p$) if and only if either $I=[0,l]$ and $\gamma(l)\in C(p)$ or $I=[0,\infty)$, i.e $\gamma$ is a ray. The set of all non-extendable geodesics (relative to $p$) is denoted by $\text{NE}(p)$. 

The following measure estimate is the key to the  proofs of our Theorems.
\begin{lemma}\label{key}
Let $N>1$ and let $(X,d,\mathfrak{m})$ be a nonbranching $\texttt{CD}^*(K,N)$ space. Let $p\in X$.

(1) If $A\subset X$ is a Borel set such that $A\cap \gamma$ contains at most one point for  every $\gamma\in\text{NE}(p)$.  Then $\mathfrak{m}(A)=0$.

(2)If $K=0$, then for every $r>0$ we have
\begin{equation}\label{eq0}
\lim\limits_{R\to\infty}\frac{\mathfrak{m}(C_r(p)\cap B_R(p))}{R^{N-1}}=0 .
\end{equation} 
\end{lemma}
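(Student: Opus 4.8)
The plan is to prove (1) by a contraction-and-disjointness argument and then to build (2) on top of it.

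For (1), I would first reduce to the case $A\subseteq B_R(p)$ for some $R$, so that $\mathfrak m(A)<\infty$ (recall $X$ is proper and $\mathfrak m$ locally finite). For $t\in(0,1)$ set $A_t:=Z_t(\{p\},A)$, the image of $A$ contracted toward $p$ to radial fraction $t$; note $A_t\subseteq \overline{B_{tR}(p)}\subseteq B_R(p)$. Applying the Brunn–Minkowski inequality with the degenerate set $\{p\}$ (whose measure is $0$) gives $\mathfrak m(A_t)^{1/N}\ge \sigma^{(t)}_{K,N}(\Theta)\,\mathfrak m(A)^{1/N}$ with $\Theta<\infty$ determined by $A\subseteq B_R(p)$; since $\sigma^{(t)}_{K,N}(\Theta)>0$ for each fixed $t\in(0,1)$ and $\sigma^{(t)}_{K,N}(\Theta)\to1$ as $t\to1$, this yields $\mathfrak m(A_t)\ge c(t)\,\mathfrak m(A)$ with $c(t)\to1$. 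The key point is that the sets $\{A_t\}_{t\in(0,1)}$ are pairwise disjoint: if $y\in A_s\cap A_{s'}$, then $y$ is an interior point of geodesics from $p$ to points $a,a'\in A$, so $y\notin C(p)$ and, by nonbranching, there is a unique non-extendable geodesic $\gamma\in\mathrm{NE}(p)$ through $y$; hence $a,a'\in\gamma$, and since $A\cap\gamma$ contains at most one point we get $a=a'$ and then $s=s'$. Choosing $t_n\uparrow1$, the $A_{t_n}$ are disjoint subsets of $B_R(p)$ each of measure $\ge\tfrac12\mathfrak m(A)$ for large $n$, which is impossible unless $\mathfrak m(A)=0$ because $\mathfrak m(B_R(p))<\infty$. (Measurability of the $A_t$ can be handled by inner regularity.)

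For (2) I would first reinterpret $C_r(p)$ using nonbranching. Every $q\notin C(p)$ lies on a unique non-extendable geodesic $\gamma_q\in\mathrm{NE}(p)$, and $q\in C_r(p)$ precisely when the part of $\gamma_q$ beyond $q$ has length $<r$; rays contribute no points. Thus $C_r(p)$ is exactly the inner $r$-collar of the cut locus: along each $\gamma\in\mathrm{NE}(p)$ terminating at a cut point $\gamma(L)$ one has $C_r(p)\cap\gamma=\gamma((L-r,L])$. Since $\mathfrak m(C(p))=0$ by part (1), it suffices to bound $\mathfrak m(\{q\in B_R(p):h(q)<r\})$, where $h(q)$ is the forward distance from $q$ to the cut locus along $\gamma_q$. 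The plan is again to contract this collar toward $p$. Now the collar meets each geodesic in a whole segment of length $<r$ rather than in a single point, so the contracted copies $Z_t(\{p\},C_r(p)\cap B_R)$ are no longer pairwise disjoint; however nonbranching shows that each point is covered only for $t$ ranging over an interval of length $\lesssim r/R$. Discretizing $t\in[\tfrac12,1)$ with spacing $\sim r/R$ then yields $\sim R/r$ copies of uniformly bounded overlap, each of measure $\ge c\,\mathfrak m(C_r(p)\cap B_R)$ by Brunn–Minkowski and all contained in $B_R(p)$; comparing with the bound $\mathfrak m(B_R(p))=O(R^N)$ gives $\mathfrak m(C_r(p)\cap B_R)=O(R^{N-1})$.

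The main obstacle is upgrading this $O(R^{N-1})$ to the stated $o(R^{N-1})$. The crude packing above is lossy exactly when $X$ has Euclidean volume growth, and the slack reflects a genuine geometric phenomenon: far from $p$ the geodesics meet the cut locus at an increasingly shallow angle, so the collar is much thinner than its length-$r$ description suggests (for instance, on a flat cone the collar has measure only of order $r\log R$). Capturing this requires feeding in the rigidity carried by the Bishop–Gromov monotonicity, namely the convergence of $\mathfrak m(B_R(p))/R^N$, equivalently the monotone decay of the one-dimensional density along each geodesic, so that the decay of this density as one approaches the cut point produces the missing factor. I expect the decisive step to be a localized comparison (shell-by-shell, or along a one-dimensional disintegration of $\mathfrak m$) in which the Brunn–Minkowski contraction ratio tends to $1$, transferring the sublinearity of the volume ratio down into the collar estimate; carrying this out with only Brunn–Minkowski and the nonbranching injectivity of the contraction maps, in the absence of a smooth volume element, is the technical heart of the lemma.
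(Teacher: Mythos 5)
Part (1) of your proposal is essentially correct and is the same mechanism as the paper's (contract toward $p$ via Brunn--Minkowski, use nonbranching plus the one-point-per-geodesic hypothesis to make the contracted copies disjoint, and pack them into a set of finite measure); your version, which keeps $t$ as a free parameter instead of the paper's shell-by-shell choice of $t_{ij}$, is if anything a little cleaner, and the measurability caveat is handled the same way by inner regularity.

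Part (2) has a genuine gap, and you have correctly located it yourself: your packing argument only yields $\mathfrak m(C_r(p)\cap B_R(p))=O(R^{N-1})$, while the lemma asserts $o(R^{N-1})$, and the little-$o$ is exactly what is used later (in Case 3 of Theorem \ref{orbit1}, to get orbit growth of order strictly less than $N-1$). The missing idea is \emph{not} a refined Bishop--Gromov density decay along geodesics, as you speculate; it is a summability argument for ``shadows'' near $p$. The paper decomposes $C_1(p)\cap B_R(p)$ into annular pieces $C_i=C_1(p)\cap(B_{i+1}(p)\setminus B_i(p))$ and, for each (compact core of) $C_i$, forms $V_{C_i}=B_1(p)\cap K_{C_i}$, where $K_{C_i}$ is the union of all geodesics from $p$ to points of $C_i$. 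A two-sided Brunn--Minkowski argument (contract $C_i$ into the annulus $B_1(p)\setminus B_{i/(i+1)}(p)$, then contract $V_{C_i}$ to see that this thin annular part of $V_{C_i}$ has measure at most $N(i+1)^{-1}\mathfrak m(V_{C_i})$) gives the Sublemma $\mathfrak m(C_i)\le N(i+1)^{N-1}\mathfrak m(V_{C_i})$. The decisive point is that the definition of $C_1(p)$ together with nonbranching forces $V_{C_i}\cap V_{C_j}=\{p\}$ whenever $|i-j|\ge 2$ (a point of $C_i$ cannot lie on a geodesic from $p$ reaching $C_j$, since its geodesic dies within distance $1$), so $\sum_i\mathfrak m(V_{C_i})\le 2\mathfrak m(B_1(p))<\infty$. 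Splitting the sum $\sum_{i\le R}N(i+1)^{N-1}\mathfrak m(V_{C_i})$ at $i=[\sqrt R]$ then gives $O(R^{(N-1)/2})$ plus $R^{N-1}$ times a vanishing tail, i.e.\ $o(R^{N-1})$. Without this (or some equivalent) disjointness-of-shadows input, your approach cannot reach the stated limit, so as written the proof of (2) is incomplete.
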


\begin{proof}
(1)Without loss of generality, we may assume $K=-1$. For every $i,R\in\mathbb{N}_+$, put 
\begin{align*}
A(R)&=A\cap (B_{R}(p)\backslash B_{R^{-1}}(p)),\\
 A_{ij}(R)&=A\cap\big(B_{R^{-1}+\frac{j+1}{i}}(p)\backslash B_{R^{-1}+\frac{j}{i}}(p)\big),j=0,1,\cdots,Ri-1.
\end{align*}
By using the Brunn-Minkowski inequality for $A_{ij}(R),\{p\}$ and time $t_{ij}=1-\frac{(2R)^{-1}}{R^{-1}+\frac{j+1}{i}} $, we obtain:
\begin{equation*}
\mathfrak{m}(Z_{t_{ij}}(A_{ij}(R),\{p\}))\geq c(N,R) \mathfrak{m}(A_{ij}(R))
\end{equation*}
where $c(N,R)>0$. Note that  the sets $Z_{t_{ij}}(A_{ij}(R),\{p\})$ are disjoint by the nonbranching condition and by the nature of $A$. Note also that the set $Z_{t_{ij}}(A_{ij}(R),\{p\})$ is contained in $ B_{(2R)^{-1}}(p)\backslash B_{(2R)^{-1}-\frac{1}{i}}(p)$ for every $j$. Now, if every $Z_{t_{ij}}(A_{ij}(R),\{p\})$   is a Borel set, we have
\begin{align*}
\mathfrak{m}(A(R))&=\sum_{j=0}^{Ri-1} \mathfrak{m}(A_{ij}(R))\\
&\leq (c(N,R))^{-1}\sum_{j=0}^{Ri-1} \mathfrak{m}(Z_{t_{ij}}(A_{ij}(R),\{p\}))\\
&\leq (c(N,R))^{-1}\mathfrak{m}(B_{(2R)^{-1}}(p)\backslash B_{(2R)^{-1}-\frac{1}{i}}(p))    .   
\end{align*}
Let $i\to\infty$, we get $\mathfrak{m}(A(R))=0$.
Since $A\backslash\{p\} =\bigcup\limits_{R=1}^\infty A(R) $, we obtain $\mathfrak{m}(A)=0$.

In general, $Z_{t_{ij}}(A_{ij}(R),\{p\})$ may not  be measurable. In this case, we  use the inner regularity of $\mathfrak{m}$ (note that $\mathfrak{m}$ is a Radon measure, cf. Theorem 7.8 in \cite{real} ). For every $\epsilon>0$ we can find compact $K_{ij}\subset A_{ij}(R)$ such that $\sum_{j=0}^{Ri-1}\mathfrak{m}(A_{ij(R)}\backslash K_{ij})<\epsilon$. Note that $Z_{t_{ij}}(K_{ij},\{p\})$ are also compact. So 
\begin{align*}
\mathfrak{m}(A(R))&< \sum_{j=0}^{Ri-1} \mathfrak{m}(K_{ij})+\epsilon\\
&\leq (c(N,R))^{-1}\sum_{j=0}^{Ri-1} \mathfrak{m}(Z_{t_{ij}}(K_{ij},\{p\}))+\epsilon\\
&\leq (c(N,R))^{-1}\mathfrak{m}(B_{(2R)^{-1}}(p)\backslash B_{(2R)^{-1}-\frac{1}{i}}(p))   +\epsilon .   
\end{align*}
Since $i$ and $\epsilon$ are arbitary, we conclude that $\mathfrak{m}(A(R))=0$.
 	
(2)We use the following sublemma:
\begin{sublemma}\label{sub}
Let $R>1$ and let $A\subset B_{R+1}(p)\backslash B_R(p)$ be  compact. Denote by $K_A$ the union of all geodesics connecting $p$ and a point $a\in A$. Put $V_A=B_1(p)\cap K_A$. then $\mathfrak{m}(A)\leq  N(R+1)^{N-1}\mathfrak{m}(V_A)$.	
\end{sublemma}
Assume the Sublemma \ref{sub} holds. Without loss of generality, we may assume $r=1$.
Put $C_i=C_1(p)\cap (B_{i+1}(p)\backslash B_i(p))$ for $i=2,3,\cdots$. By the inner regularity of $\mathfrak{m}$, We may choose compact $C_i^\prime\subset C_i$ such that $\sum_{i=2}^{\infty} \mathfrak{m}(C_i\backslash C_i^\prime)<1$. Note that by the nonbranching condition and the definition  of $C_1(p	)$, $V_{C_i^\prime}\cap V_{C_j^\prime}=\{p\}$ for $|i-j|\geq 2$.  The compactness of $C_i^\prime$ guarantee that $K_{C_i^\prime}$ are compact, hence $V_{C_i^\prime}$ are Borel. So $\sum_{i=2}^{\infty} \mathfrak{m}(V_{C_i^\prime})\leq 2\mathfrak{m}(B_1(p))$.
By Sublemma \ref{sub}, we have
\begin{align*}
\mathfrak{m}(C_1(p)\cap B_R(p))\leq& \mathfrak{m}(B_2(p))+\sum_{i=2}^{[R]}\mathfrak{m}(C_i) \\
<& \mathfrak{m}(B_2(p))+\sum_{i=2}^{[R]}\mathfrak{m}(C_i^\prime)+1\\ 
 \leq&  \mathfrak{m}(B_2(p))+\sum_{i=2}^{[R]} N(i+1)^{N-1}\mathfrak{m}(V_{C_i^\prime})+1\\
=& \mathfrak{m}(B_2(p))+ \sum_{i=2}^{[\sqrt{R}]} N(i+1)^{N-1}\mathfrak{m}(V_{C_i^\prime})+ \sum_{i=[\sqrt{R}]+1}^{[R]} N(i+1)^{N-1}\mathfrak{m}(V_{C_i^\prime})+1\\
\leq& \mathfrak{m}(B_2(p))+2N([\sqrt{R}]+1)^{N-1}\mathfrak{m}(B_1(p))\\
&+N([R]+1)^{N-1}\sum_{i=[\sqrt{R}]+1}^{[R]}\mathfrak{m}(V_{C_i^\prime})+1.
\end{align*}
Since $\lim\limits_{R\to\infty}\sum_{i=[\sqrt{R}]+1}^{[R]}\mathfrak{m}(V_{C_i^\prime})=0$, we obtained (\ref{eq0}).

\begin{proof}[Proof of the Sublemma \ref{sub}]
Note that $K_A$ is compact, hence $V_A$ is Borel. By using the Brunn-Minkowski inequality for $A,\{p\}$ and $t=1-\frac{1}{R+1}$, we get:
\begin{equation}\label{eq1}
\mathfrak{m}(Z_{t}(A,\{p\}))\geq  (1-t)^N\mathfrak{m}(A).
\end{equation}
Put $F=V_A\backslash B_{\frac{R}{R+1}}(p)$, then $Z_t(A,\{p\})\subset F$. Use the Brunn-Minkowski inequality for $V_A,\{p\}$ and $t^\prime=\frac{1}{R+1}$, we get:
\begin{equation}\label{eq2}
\mathfrak{m}(Z_{t^\prime}(V_A,\{p\}))\geq (1-t^\prime)^N \mathfrak{m}(V_A)
\end{equation}
Note that $Z_{t^\prime}(V_A,\{p\})=V_A\backslash F$, so we obtain from (\ref{eq2}) that:
\begin{equation}\label{eq3}
\mathfrak{m}(F)\leq \left(1-\big(1-\frac{1}{R+1}\big)^N \right)\mathfrak{m}(V_A)\leq N(R+1)^{-1}\mathfrak{m}(V_A).
\end{equation}
Now (\ref{eq1}) and (\ref{eq3}) together give:
\begin{equation*}
	\mathfrak{m}(A)\leq (R-1)^N\mathfrak{m}(F)\leq N(R+1)^{N-1}\mathfrak{m}(V_A).
\end{equation*}  
\end{proof}

\end{proof}
\begin{remark}
Let $(X,d,\mathfrak{m})$ be a nonbranching $\text{CD}^*(K,N)$ space.We obtain immediately from Lemma \ref{key} (1) that the cut locus of a point $p\in X$ has measure 0.
\end{remark}

\begin{proof}[Proof of Theorem \ref{orbit2}] (1) For every  $\gamma\in \text{NE}(p)$, let us  prove that $E(p,q)\cap \gamma$ contains at most one point. This implies   $\mathfrak{m}(\text{E}(p,q))=0$ by Lemma \ref{key} (1) (note that $E(p,q)$ is closed, thus Borel).
	
	Assume that there are two points $a,b$ in $\text{E}(p,q)\cap\gamma$ and $d(p,a)<d(p,b)$.  Then
	\begin{align*}
		d(b,q)&= d(b,p)\\
		&=d(p,a)+d(a,b)\\
		&=d(q,a)+d(a,b).
	\end{align*}
	This causes the branching of geodesics.

	(2) Note that $(\bar{X},\bar{d},\bar{\mathfrak{m}})$ is still a nonbranching $\text{CD}^*(K,N)$ space by \cite{local}, so the result of (1) is applicable to $(\bar{X},\bar{d},\bar{\mathfrak{m}})$. For every $g\in G$, put $D_g=\{z\in \bar{X}\mid d(z,\bar{x}_0)<d(z,g\bar{x}_0)\}$. The nonbranching condition implies that $$\partial D_g=\{z\in \bar{X}\mid d(z,\bar{x}_0)=d(z,g\bar{x}_0)\}.$$ 
By (1), $\mathfrak{m}(\partial D_g)=0$ for every $g\in G$. 	Define the Dirichlet domain $F$ associated to $\bar{x}_0$ by $F=\bigcap\limits_{g\in G,g\neq e} D_g$. Denote by $\overline{F}$ the closure of $F$ in $\bar{X}$. Since $\partial F= \overline{F}\backslash F$ is contained in the countable union of $\partial D_g$, we obtain $\mathfrak{m}(\partial F)=0$. The proof of (2) then follows word by word from the proof of Theorem 1.1 of \cite{anderson} and Theorem 4 of \cite{Ye}.

\end{proof}

Note that an $\texttt{RCD}(K,N)$ space is nonbranching by \cite{nb}. So  Lemma \ref{key} (2) is applicable to $\texttt{RCD}(0,N)$ spaces. 
\begin{proof}[Proof of Theorem \ref{orbit1}] 
By the splitting theorem, we have $$(\tilde{X},d_{\tilde{X}},\tilde{\mathfrak{m}},\tilde{x}_0)\cong (Y\times\mathbb{R}^k,d_Y\times d_{\mathbb{R}^k},\mathfrak{m}_Y\times \mathcal{L}^k,(y_0,0^k)),$$ where $k\leq [N]$ is an integer, $\mathcal{L}^k$ is the Lebesgue measure and $(Y,d_Y,\mathfrak{m}_Y)$ is a simply connected  $\texttt{RCD}(0,N-k)$ space which contains no line. There are following three possibilities:

Case 1.  $k=[N]$, then $Y$ is a point. So $(X,d,\mathfrak{m})$ is a flat $[N]$-manifold with $\mathfrak{m}$ a multiple of the Riemannian volume. In this case, $\Gamma$ has polynomial orbit growth of order $\leq k-1=[N]-1$, and $\Gamma$ fails to have polynomial orbit growth of order $<[N]-1$ if and only if $X$ has a $[N]-1$ dimensional soul (cf. Proposition 4 (1) of \cite{Ye}).

Case 2. $k=[N]-1$, then $(Y,d_Y,\mathfrak{m}_Y)$ is an $\texttt{RCD}(0,N-[N]+1)$ space. Since $N-[N]+1<2$,
 we have  by Corollary 1.2 in  \cite{low} that  $(Y,d_Y)=[0,T]$ for some $T\geq0$ or $(Y,d_Y)=[0,\infty)$ since $Y$ is simply connected and $Y\neq \mathbb{R}$. 
 
 If $(Y,d_Y)=[0,T]$, then one may use Theorem  \ref{orbit2} (2) (\ref{and}) and the fact that $(X,d,\mathfrak{m})$ has at least linear volume growth to conclude that $\Gamma$ has polynomial orbit growth of order $\leq [N]-2$. 
 
 If $(Y,d_Y)=[0,\infty)$, then  $\text{Isom}_{met}(Y)$,  the metric isometry group of $Y$, is trivial. So $\Gamma\subset \text{Isom}_{met}(\mathbb{R}^k)$.  In the case that $\mathbb{R}^k/\Gamma$ is a compact manifold,  $\Gamma$ has a finite index normal subgroup $\mathbb{Z}^k$ (the translation part ) by Bieberbach theorem. So $\Gamma$ has polynomial orbit growth of order $\geq k$ and $\leq k$ (cf. Lemma 3 of \cite{Ye}).  In the case that  $\mathbb{R}^k/\Gamma$ is an open manifold, 
 $\Gamma$ has polynomial orbit growth of order $\leq k-1=[N]-2$ (cf. Proposition 4 (1) of \cite{Ye}).

Case 3. $k\leq [N]-2$. Then $(Y,d_Y,\mathfrak{m}_Y)$ is an $\texttt{RCD}(0,N-k)$ space with $N-k\geq 2$, so Lemma \ref{key} (2) is applicable to $Y$. Fix an $l>0$ such that $B_l(g_1\cdot\tilde{x}_0)\cap B_l(g_2\cdot\tilde{x}_0)=\emptyset, \forall g_1,g_2\in\Gamma,g_1\neq g_2$. Since $Y$ contains no line, by contradiction argument there exists an $h>0$ such that $$\text{Isom}_{met}(Y)\cdot B_l(y_0)\subset B_h(y_0)\cup C_h(y_0),$$
 where $\text{Isom}_{met}(Y)$ is the metric isometry group of $Y$ (cf. Lemma 1 of \cite{Ye}).

Since the metric isometry group of $\tilde{X}$ splits, we have
\begin{equation*}
\bigcup\limits_{g\in\Gamma, d_{\tilde{X}}(\tilde{x}_0,g\cdot\tilde{x}_0)\leq R}B_{l}(g\cdot \tilde{x}_0)\subset \left( (B_h(y_0)\cup C_h(y_0))\cap B_{R+l}(y_0)\right) \times B_{R+l}(0^k),\forall R>0.
\end{equation*}
 So 
\begin{align*}
\tilde{\mathfrak{m}}(B_l(\tilde{x}_0))\cdot \#(D^\Gamma_R(\tilde{x}_0))&\leq \mathfrak{m}_Y( (B_h(y_0)\cup C_h(y_0))\cap B_{R+l}(y_0))\cdot \mathcal{L}^k(B_{R+l}(0^k))\\
&\leq \big(\mathfrak{m}_Y(B_h(y_0))+\mathfrak{m}_Y(C_h(y_0)\cap B_{R+l}(y_0)) \big)\omega_k(R+l)^k\\
&=\omega_k\mathfrak{m}_Y(B_h(y_0))(R+l)^k+\omega_kf(R+l)(R+l)^{N-1}. \,\,(*)
\end{align*} 
 where $\omega_k=\mathcal{L}^k(B_1(0^k))$ and
 $\mathfrak{m}_Y(C_h(y_0)\cap B_{R+l}(y_0))=f(R+l)(R+l)^{N-k-1}$ and we have $\lim\limits_{r\to\infty} f(r)=0$ by Lemma \ref{key} (2). 
 So  $\Gamma$ has polynomial orbit growth of order $< N-1$ in this case. 
 \\
 
 Now if $N\notin\mathbb{N}$, we conclude that $\Gamma$ has polynomial orbit of order $<N-1$. 
 
 If $N\in\mathbb{N}$, we conclude that $\Gamma$ has polynomial orbit of order $\leq N-1$.  Now consider the situation  that  $\Gamma$ fails to have polynomial orbit of order $< N-1$. Then $k$ can only equals $N$ or $N-1$. If $k=N$, then $(X,d,\mathfrak{m})$ is a flat $N$-manifold with an $N-1$ dimensional soul and with $\mathfrak{m}$ a multiple of the Riemannian volume. If $k=N-1$, then $(Y,d_Y)=[0,\infty)$ and $\mathbb{R}^k/\Gamma$ is a compact flat $(N-1)$-manifold as analysed in Case 2. Since  $(Y,d_Y,\mathfrak{m}_Y)$ is an $\texttt{RCD}(0,1)$ space, $\mathfrak{m}_Y$ can only be a multiple of $\mathcal{L}^1$ (see the Remark \ref{1} below). This completes the proof.

\end{proof}
\begin{remark} \label{1}
If $(Y,d_Y)=[0,\infty)$ and $(Y,d_Y,\mathfrak{m}_Y)$ is an $\texttt{RCD}(0,1)$ space, we show that $\mathfrak{m}_Y$ can only be a multiple of $\mathcal{L}^1$ here: For $i\in\mathbb{N}_+$, let $I_j=[\frac{j}{i},\frac{j+1}{i}],j=0,1,2,\cdots$. Use the Brunn-Minkowski inequality for $A=I_{j}\cup I_{j+1},B=\{\frac{j}{i}\}$ and $t=\frac{1}{2}$, we obtain that $\mathfrak{m}_Y(I_j)\geq  \frac{1}{2}\mathfrak{m}_Y(A)$. Simialrly, $\mathfrak{m}_Y(I_{j+1})\geq  \frac{1}{2}\mathfrak{m}_Y(A)$. Since a single point has measure $0$, we have $\mathfrak{m}_Y(I_j)=\mathfrak{m}_Y(I_{j+1})$. Since  $j$ is arbitrary, we obtain that  $\mathfrak{m}_Y([p,q])=c(q-p),\forall 0\leq p\leq q,p,q\in\mathbb{Q}$, where $c=\mathfrak{m}_Y([0,1])>0$. We conclude that $\mathfrak{m}_Y([a,b])=c(b-a),\forall 0\leq a\leq b,a,b\in\mathbb{R}$ by taking limits. Since every open set of $[0,\infty)$ is a disjoint countable union of intervals, we conclude that $\mathfrak{m}_Y=c\mathcal{L}^1$ on open sets. It   follows from a classical result in measure theory that $\mathfrak{m}_Y=c\mathcal{L}^1$ on all Borel sets.  
\end{remark}

\section{Appendix:  A proof of Proposition \ref{iso}}

Fix $(p,v)\in N\times \mathbb{R}^k$, and set $F(p,v)=(q,w)$. Then $F(\{p\}\times \mathbb{R}^k)=\{q\}\times \mathbb{R}^k$ since $F$ maps a line to another line and since $N$ contains no line.  
\begin{claim}\label{cliso}
$F(N\times \{v\})=N\times\{w\}$.
\end{claim}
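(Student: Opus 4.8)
The plan is to characterize the horizontal slice $N\times\{v\}$ in a purely metric way that is manifestly preserved by $F$, exploiting the already-established fact that $F$ carries the Euclidean slice $\{p\}\times\mathbb{R}^k$ onto $\{q\}\times\mathbb{R}^k$. The characterization I would use is via nearest-point projection onto this Euclidean slice.

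First I would record the product-metric formula $d\big((x,a),(x',a')\big)=\sqrt{d_N(x,x')^2+|a-a'|^2}$ and use it to compute the nearest-point projection onto $S:=\{p\}\times\mathbb{R}^k$. For a point $(x,a)$, minimizing $\sqrt{d_N(x,p)^2+|a-b|^2}$ over $b\in\mathbb{R}^k$ gives the \emph{unique} minimizer $b=a$, so the projection is $\pi_S(x,a)=(p,a)$ and it is well defined. Consequently $N\times\{v\}=\{z:\pi_S(z)=(p,v)\}$ is exactly the fiber of $\pi_S$ over $(p,v)$; the same computation on the image side shows $\{z':\pi_{S'}(z')=(q,w)\}=N\times\{w\}$, where $S'=\{q\}\times\mathbb{R}^k$.

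The key step is then the elementary fact that a surjective isometry intertwines unique nearest-point projections: if $F(S)=S'$ and $z$ has unique nearest point $\pi_S(z)=y$ in $S$, then $F(z)$ has unique nearest point $F(y)$ in $S'$, since $d(F(z),F(s))=d(z,s)$ for all $s\in S$ and $F$ restricts to a bijection $S\to S'$. Applying this, $F$ maps the fiber $\pi_S^{-1}(y_0)$ bijectively onto $\pi_{S'}^{-1}(F(y_0))$. Taking $y_0=(p,v)$ and using $F(p,v)=(q,w)$ yields $F(N\times\{v\})=F\big(\pi_S^{-1}(p,v)\big)=\pi_{S'}^{-1}(q,w)=N\times\{w\}$, which is the claim.

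I do not expect a genuine obstacle here, since $N$ plays no role beyond the product structure; the only points requiring care are the uniqueness of the projection (it fails in a general factor, but is immediate over the Euclidean factor) and the hypothesis $F(S)=S'$, which is precisely the line-preservation fact established just before the claim. An alternative, more computational route would use the already-known preservation of the vertical fibers to write $F(x,a)=(f(x),A_x a)$ with each $A_x$ a rigid motion of $\mathbb{R}^k$, expand the isometry identity $d\big((x,a),(y,b)\big)=d\big(F(x,a),F(y,b)\big)$, and deduce first that the orthogonal (linear) part of $A_x$ is independent of $x$ and then that its translation part is constant; this proves the full splitting at once but requires more bookkeeping than the projection argument.
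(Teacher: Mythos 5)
Your proposal is correct and is essentially the paper's own argument: both rest on the observation that the unique nearest point of $(n,v)$ on the slice $\{p\}\times\mathbb{R}^k$ is $(p,v)$, and that an isometry carrying this slice to $\{q\}\times\mathbb{R}^k$ must preserve such unique nearest-point relations. Your packaging via fibers of the projection $\pi_S$ merely combines into one step what the paper does by running the argument once for $F$ and once for $F^{-1}$.
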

\begin{proof}
For $n\in N$,  assume that $F(n,v)=(m,w^\prime)$. Note that $(p,v)$ is the unique point on $\{p\}\times\mathbb{R}^k$ that is closest to $(n,v)$. So $F(p,v)=(q,w)$ is the unique point on $\{q\}\times\mathbb{R}^k$ that is closest to $F(n,v)$. Since $(q,w^\prime)$ is the unique point on $\{q\}\times\mathbb{R}^k$ that is closest to $F(n,v)=(m,w^\prime)$, we obtain that $w=w^\prime$. So $F(N\times\{v\}\subset N\times\{w\})$. The same argument shows that $F^{-1}(N\times\{w\})\subset N\times\{v\}$. This proves the Claim.

\end{proof}
By Claim \ref{cliso},  there are isometry  $f:N\rightarrow N$
and $g:\mathbb{R}^k\rightarrow \mathbb{R}^k$ such that $F(x,v)=(f(x),w),F(p,y)=(q,g(y)),\forall x\in N,y\in\mathbb{R}^k$.

Now we check that $F(x,y)=(f(x),g(y))$. To see this, note that $(x,v)$ is the unique point on $N\times\{v\}$ that is closest to $(x,y)$, and that $(p,y)$ is the unique point on $\{p\}\times\mathbb{R}^k$ that is closest to $(x,y)$.
 So $F(x,v)=(f(x),w)$ is the unique point on $N\times \{w\}$ that is closest to $F(x,y)$, and $F(p,y)=(q,g(y))$ is the unique point on $\{q\}\times\mathbb{R}^k$ that is closest to $F(x,y)$. That is $F(x,y)= (f(x),g(y))$.

\section{Acknowledgement}
The author thanks his advisor Professor Xiaochun Rong for suggesting  this problem and for helpful discussion.

\clearpage
\bibliography{refs}
\end{document}